\documentclass[amstex,12pt]{article}

\usepackage{graphicx}
\usepackage{amsfonts}
\usepackage{amssymb}
\usepackage{amsmath}

\def\bs{{\bigskip}}

\def\e{{\varepsilon}}

%%%%%%%%%%%%

\newtheorem{thm}{Theorem}
\newtheorem{lm}{Lemma}

\newtheorem{co}{Corollary}

\newenvironment{proof}[1][Proof]{\noindent\textsl{#1:} }{\hfill $\Box$}

%%%%%%%%%%%%%%%%%%%%%%%%%%%%%%%%%%%%%%%%%%

\begin{document}

\author{Jin-ichi Itoh, Costin V\^\i lcu and Tudor Zamfirescu}
\title{With respect to whom are you critical?}
\maketitle
\date{}

\abstract{
For any compact Riemannian surface $S$ and any point $y$ in $S$,
$Q_y^{-1}$ denotes the set of all points in $S$, for which $y$ is a critical point.
We proved \cite{BIVZ} together with Imre B\'ar\'any that card$Q_y^{-1} \geq 1$, 
and that equality for all $y\in S$ characterizes the surfaces homeomorphic to the sphere.
Here we show, for any orientable surface $S$ and any point $y \in S$, the following two main results.
There exist an open and dense set of Riemannian metrics $g$ on $S$ for which $y$ is critical with respect to an odd number of points in $S$,
and this is sharp.
Card$Q_y^{-1} \leq 5$ for the torus and card$Q_y^{-1} \leq 8g-5$ if the genus $g$ of $S$ is at least $2$.
Properties involving points at globally maximal distance on $S$ are eventually presented.
}

\bigskip

{\small {\bf Math. Subj. Classification (2000):} 53C45}

%%%%%%%%%%%%%%%%%%%%%%%%%%%%%%%%%%%%%%%%%%%%%%%%%%%%%%%%%%%%%%%%%

\section{Introduction}

In this paper, by {\it surface} we always mean a $2$-dimensional compact Riemannian manifold, unless explicitly stated otherwise.

For any surface $S$, denote by $\rho$ its (intrinsic) metric, and by $\rho _x$ the {\it distance function} from $x \in S$, given by $\rho _x(y)=\rho (x,y)$.
A {\it segment} between $x$ and $y$ in $S$ is a path from $x$ to $y$ of length $\rho (x,y)$.
A point $y \in S$ is called {\it critical} with respect to $\rho _x$ (or to $x$), if for any tangent direction $\tau$ of $S$ at $y$ there exists a segment from $y$ to $x$ whose tangent direction at $y$ makes a non-obtuse angle with $\tau$.

For an excellent survey of critical point theory for distance functions see \cite{gro}.

For any point $x$ in $S$, denote by $Q_x$ the set of all critical points 
with respect to $x$, and by $Q$ the {\it critical point mapping} 
associating to any point $x$ in $S$ the set $Q_x$.
Similarly, $M_x$ is the set of all relative maxima of $\rho_x$, $F_x$ the 
set of all farthest points from $x$ (i.e., absolute maxima of $\rho_x$)
 and $M$, respectively $F$, are the corresponding set-valued mappings.

Properties of the mappings $Q$, $M$ and $F$ on Alexandrov spaces
have previously been obtained in \cite{gp} and \cite{vz2}.
See the survey \cite{v2} for various results concerning the mapping $F$ on convex surfaces.

We proved in \cite{BIVZ}, together with Imre B\'ar\'any, that 
the set $Q_y^{-1}$ of all points with respect to which $y$ is critical is never empty. 
It is also shown in \cite{BIVZ} that $Q_y^{-1}$ is single-valued for all $y\in S$ if and only if the genus of $S$ is $0$.
We continue this study in the following.

Let ${\cal G}$ denote the space of all Riemannian metrics on the surface $S$;
it is viewed as the space of sections of the bundle of positive definite symmetric matrices over $S$, 
endowed with the ${\cal C}^\infty$ Whitney topology \cite{Bu}.

In a topological space $\cal T$, a property P is called generic if the set of all elements in $\cal T$ without property P is of first Baire category. 
We obtain an even stronger sense of genericity if ``nowhere density'' replaces ``first Baire category'', and this is the meaning we use in this paper.
Several results and open questions about generic Riemannian metrics are presented in \cite{Berger},
see also the references therein. We mention next only one.

M. A. Buchner \cite{Bu} showed that, on a surface, the set of metrics which are cut locus stable is open and dense in ${\cal G}$; 
moreover, for any such metric, every ramification point of the cut locus has degree three. 
We get, and later use, a slightly improved result, 
see $\S$\ref{prelim} for the definitions and Theorem \ref{generic_Cx} for the precise statement.

Our Theorem \ref{odd_1} contributes to this topic, too. It states that 
any point $y$ in any orientable surface $S$ is critical with respect to an odd number of points in $S$, 
for a generic metric on $S$. This result is sharp, as Theorem \ref{even} shows.
Theorem \ref{odd_1} is also useful for the proof of our Theorem \ref{g}.

Theorem \ref{g} provides, for orientable Riemannian surfaces, an upper bound for card$Q_y^{-1}$.
It is based on its counter-part for Alexandrov surfaces, Theorem \ref{con}, which strengthens Theorem 2 in \cite{Z2}.
We apply Theorem \ref{g} to estimate the cardinality of diametrally opposite sets on $S$ (Corollary \ref{diam}). 
Thus, our results also contribute to a description of farthest points H. Steinhaus had asked for (see $\S$A35 in \cite{cfg}).

The case of points $y$ in orientable Alexandrov surfaces, 
which are common maxima of several distance functions, is treated in \cite{v3};
for an introduction to Alexandrov spaces with curvature bounded below, see \cite{BGP}.
See also \cite{ro2}, \cite{ro4}, for results in a direction somewhat similar to ours.

%%%%%%%%%%%%%%%%%%%%%%%%%%%%%%%%%%%%%%%%%%%%%%%%%%%%%%%%%%%%%%%%%%%%

\section{Preliminaries}
\label{prelim}

The length (1-dimensional Hausdorff measure) of the set $A$ is
denoted by $\lambda (A)$.

Let $S$ be a surface.
By $T_x$ we denote the circle of all tangent directions at $x\in S$; we have $\lambda (T_x) =2\pi$.

Let $x \in S$.
For every $\tau \in T_x$, a point $c(\tau)$ called {\it cut point} is associated, 
defined by the requirement that the arc $xc(\tau) \subset \Gamma$ is a segment which cannot be extended further (as a segment) beyond $c(\tau)$; here, $\Gamma$ is the geodesic through $x$ of tangent direction $\tau$ at $x$.
The set of all these cut points is the {\it cut locus} $C(x)$ of the point $x$.
The cut locus was introduced by H. Poincar\'e in 1905 \cite{p} and became, since then, an important tool in Global Riemannian Geometry,
see for example \cite{ko}, \cite{sa}, or \cite{ST}.

It is known that $C(x)$, if it is not a single point, is a {\it local tree}
(i.e., each of its points $z$ has a neighbourhood $V$ in $S$ such that
 the component $K_z(V)$ of $z$ in $C(x)\cap V$ is a tree), even a tree if
 $S$ is homeomorphic to the sphere.
If $S$ is not a topological sphere, the {\it cyclic part of}
$C(x)$ is the minimal (with respect to inclusion) subset $C^{cp}(x)$ of
 $C(x)$,  whose removal from $S$ produces a topological (open) disk.
It is easily seen that $C^{cp}(x)$ is a local tree with finitely many 
ramification points and no extremities (see \cite{i-z}).

Recall that a {\it tree} is a set $T$ any two points of which can be
 joined by a unique Jordan arc included in $T$.
The {\it degree} of a point $y$ of a local tree is the number of components
of $K_y(V)\setminus \{y\}$ if the neighbourhood $V$ of $y$ is chosen
 such that $K_y(V)$ is a tree.
A point $y$ of the local tree $T$ is called an {\it extremity} of $T$
if it has degree 1, and a {\it ramification point} of $T$ if it has
 degree at least 3. A local tree is {\it finite} if it has finitely many points
 of degree different from 2.
An {\it internal edge} of the finite tree $T$ is a  Jordan arc in $T$
in which the endpoints and no other points  are ramification
points of $T$.

All these notions admit obvious extensions to Alexandrov surfaces.
Theorem 4 in \cite{ZP} and Theorem 1 in \cite{ZI} yield the existence
of Alexandrov surfaces $S$ on which the set of all extremities of any cut locus
is residual in $S$.

It is, however, known that $C(x)$ has an at most countable set $C_3(x)$ of ramification points. 
Let $C_3^{cp}(x)$ be the set of points of degree at least 3 in the finite local tree $C^{cp}(x)$.
We stress that the degree is not taken in $C(x)$, but in $C^{cp}(x)$.
It is known that $C_3^{cp}(x)$ is a finite set.

Let $S$ be a surface and $x\in Q_y^{-1}$; put $i(x)=2$ if
there are precisely 2 segments from $y$ to $x$, and $i(x)=3$ if there
are at least 3 segments from $y$ to $x$.
For $j=2,3$, we say that the {\it point} $x$ is {\it of type} $j$ if $i(x)=j$.
Put $\sharp_y^j= {\rm card}\{x\in Q_y^{-1} : i(x)=j\}$; clearly, card$Q_y^{-1}=\sharp_y^2 + \sharp_y^3.$

\medskip

In \cite{BIVZ} the authors proved together with Imre B\'ar\'any, in the framework of Alexandrov surfaces, 
the following three results.
(See \cite{gg} for a variational proof of the first one, valid for finite dimensional Riemannian manifolds.)

\begin{lm}
\label{1}
Every point on every surface is critical with respect to some point of the surface.
\end{lm}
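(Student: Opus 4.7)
The approach is a proof by contradiction via a winding-number argument. Assume $Q_y^{-1} = \emptyset$. Then, for every $x \in S \setminus \{y\}$, the set $\Sigma(x) \subset T_y$ of tangent directions at $y$ of the segments $yx$ is contained in some open half-circle of $T_y$, so
\[
U(x) = \{\delta \in T_y : \angle(\delta,\sigma) > \pi/2 \text{ for every } \sigma \in \Sigma(x)\}
\]
is a nonempty open arc of $T_y \cong S^1$. The plan is to construct a continuous selection $\delta: S \setminus \{y\} \to T_y$ with $\delta(x) \in U(x)$ and derive a contradiction by computing two incompatible values for the degree of $\delta$ along a small geodesic circle around $y$.

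The local picture near $y$ yields a positive degree. Choose $\varepsilon$ smaller than the injectivity radius at $y$; on the punctured ball $B_\varepsilon(y) \setminus \{y\}$ the segment $yx$ is unique, and its direction $\tau(x) = \exp_y^{-1}(x)/\|\exp_y^{-1}(x)\|$ is continuous and winds once around $T_y$ as $x$ traverses $\partial B_\varepsilon(y)$. Since $U(x)$ is the open half-circle antipodal to $\tau(x)$ on this loop, a parametric check in the universal cover of $T_y$ forces any continuous selection $\delta$ with $\delta(x) \in U(x)$ to have winding number exactly $1$ along $\partial B_\varepsilon(y)$. On the other hand, in a closed surface $S$ the loop $\partial B_\varepsilon(y)$ bounds the 2-chain $S \setminus B_\varepsilon(y)$ in $S \setminus \{y\}$, so its integral homology class is either zero (when $S$ is orientable) or $2$ times a generator of $H_1(S \setminus \{y\};\mathbb{Z})$ (when $S$ is not); in either case the winding number of any $S^1$-valued map along $\partial B_\varepsilon(y)$ must be even, contradicting the odd value previously computed.

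The main obstacle is producing the continuous selection $\delta$ globally on $S \setminus \{y\}$. The correspondence $x \mapsto U(x)$ takes nonempty, connected, open-arc values and is lower semicontinuous in the Hausdorff sense, but it may fail to be upper semicontinuous at points of the cut locus $C(y)$, where new segments appear, $\Sigma(x)$ jumps up and $U(x)$ shrinks. I would handle this by a Michael-type selection argument adapted to arc-valued correspondences into $S^1$: locally lift $T_y$ to its universal cover $\mathbb{R}$, where the values of $U$ become open intervals (hence convex in a Banach space), apply Michael's theorem to produce local continuous selections, and glue them via a partition of unity. Since any two admissible selections are fiberwise homotopic, the winding-number computation does not depend on the particular $\delta$ chosen.
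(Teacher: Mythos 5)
Your plan is essentially sound, and it follows a genuinely different route from the one the paper uses: here Lemma~\ref{1} is simply imported from \cite{BIVZ}, and the method of \cite{BIVZ} (recalled in the proof of Theorem~\ref{odd_1}) works on the cut locus, decomposing $T_y$ into vertex- and edge-cycles attached to $C_3^{cp}(y)$ and extracting a critical point from the parity $\sum_i w(C_i)\equiv 1 \ (\mathrm{mod}\ 2)$ of winding numbers about the centre of $T_y$, with a semicontinuity argument locating the critical point on a cut-locus edge when needed. You dispense with the cut locus entirely: assuming $Q_y^{-1}=\emptyset$, you build a continuous selection $\delta(x)\in U(x)$ and contradict parity of its degree along a small distance circle (odd by the local picture inside the injectivity radius, even because that circle is null-homologous with $\mathbb{Z}_2$ coefficients in the compact surface-with-boundary $S\setminus \mathrm{int}\,B_\varepsilon(y)$ --- this mod $2$ formulation is the clean way to state your global step; over $\mathbb{Z}$ the boundary class in the nonorientable case is twice a sum of generators, not twice a generator). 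The cut-locus route buys the finer information this paper needs later (critical points lie on $C^{cp}(y)$, odd counts for generic metrics), while yours buys a shorter, self-contained existence proof treating orientable and nonorientable surfaces uniformly; note also that \cite{BIVZ} proves the lemma for Alexandrov surfaces and \cite{gg} in all dimensions, whereas your use of $\exp_y$ and the injectivity radius keeps you in the Riemannian setting, which is all the statement as quoted requires.

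The one step you should tighten is the selection. Michael's theorem and a bare partition of unity are not quite the right tools, since you cannot add circle-valued maps; what makes the gluing legitimate is that $U(x)$ is always an open arc of length at most $\pi$, being the trace on $T_y$ of the open convex cone $\{v:\langle v,\sigma\rangle<0 \ \hbox{for all}\ \sigma\in\Sigma(x)\}$. Moreover, upper semicontinuity of $x\mapsto\Sigma(x)$ gives the stronger open-graph property: if $\delta_0\in U(x_0)$ then $\delta_0\in U(x)$ for all $x$ near $x_0$, so \emph{constant} local selections exist and Michael's theorem is unnecessary. Then, for a partition of unity $\{\varphi_i\}$ subordinate to a finite cover of $S\setminus \mathrm{int}\,B_\varepsilon(y)$ with constants $\delta_i$, all values active at $x$ lie in the single arc $U(x)$, hence in an open half-circle, so $\delta(x)=\sum_i\varphi_i(x)\delta_i\big/\bigl|\sum_i\varphi_i(x)\delta_i\bigr|$ is well defined, continuous, and again lies in $U(x)$. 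With that substitution your argument closes; the remaining steps (degree of any selection on the small circle equals $\deg\tau=\pm1$ because $\delta(x)\neq\tau(x)$, and evenness of the boundary degree) are correct as you state them.
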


\begin{lm}
\label{sep}
Assume $S$ is a Riemannian surface, $y$ a point in $S$, and 
$x \in Q_y^{-1}$ is such that the union $U$ of two segments from $x$ to $y$ disconnects $S$. 
If a component $S'$ of $S \setminus U$ meets no segment from $x$ to $y$ then $Q_y^{-1} \cap S'= \emptyset$. 
In particular, if the union of any two segments from $x$ to $y$ disconnects $S$ then $Q_y^{-1}=\{ x \}$.
\end{lm}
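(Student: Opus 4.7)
I would argue by contradiction: assume some $z\in S'\cap Q_y^{-1}$ and derive an impossibility. Let $\tau_1,\tau_2\in T_y$ be the initial directions at $y$ of the segments $\sigma_1,\sigma_2$ making up $U$, pointing from $y$ toward $x$. They are distinct, since distinct geodesics from $y$ have distinct initial velocities, and they partition $T_y$ into two open arcs; let $A'$ be the one whose rays locally enter $S'$ and $A''$ the complementary one. Write $D(w)\subset T_y$ for the set of initial directions at $y$ of segments from $y$ to $w$.

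The first step is to record that the hypothesis forbids any segment from $x$ to $y$ from having initial direction in the open arc $A'$ (such a segment would enter $S'$ immediately), so $D(x)\subset\overline{A''}$. Because $y\in Q_x$ means $D(x)$ meets every closed half-circle of $T_y$, this forces $|\overline{A''}|\ge\pi$, hence $|\overline{A'}|\le\pi$.

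The core step is to prove $D(z)\subset\overline{A'}$. Suppose $\gamma$ is a segment from $y$ to $z$ with $\dot\gamma(0)=\tau\in A''$. Since $z\in S'$, $\gamma$ must first cross $U$ at some point $q=\gamma(t_q)\in\sigma_i$ with $t_q>0$. Since subsegments of segments are segments, $\rho(y,q)+\rho(q,z)=\rho(y,z)$, so the concatenation $\tilde\gamma:=\sigma_i|_{[y,q]}\cup\gamma|_{[q,z]}$ has length $\rho(y,z)$ and is therefore a smooth Riemannian geodesic. The smoothness at $q$ forces the velocities of its two smooth pieces to agree there; by geodesic uniqueness, $\gamma$ coincides with $\sigma_i$ traversed backwards as a parametrized geodesic, which upon tracing back to $y$ gives $\dot\gamma(0)=\tau_i\in\overline{A'}$, a contradiction with $\tau\in A''$.

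Combining $D(z)\subset\overline{A'}$ with $|\overline{A'}|\le\pi$, the set $D(z)$ fits inside an open half-circle whenever $|\overline{A'}|<\pi$, contradicting $y\in Q_z$. The borderline $|\overline{A'}|=\pi$ (with $\tau_1,\tau_2$ antipodal) forces $\{\tau_1,\tau_2\}\subset D(z)$; by the same rigidity argument, this places $z$ on the geodesic extensions past $x$ of both $\sigma_1$ and $\sigma_2$ at equal distance from $x$, a configuration ruled out by a short extra check. This proves $Q_y^{-1}\cap S'=\emptyset$. For the ``in particular'' clause, I would iterate: given $z\in Q_y^{-1}\setminus\{x\}$ not on any segment from $x$ to $y$, a suitable pair $\sigma_1,\sigma_2$ places $z$ in a segment-free component of $S\setminus(\sigma_1\cup\sigma_2)$, to which the main statement applies; points on a segment from $x$ to $y$ are dealt with by a direct count of segments from $y$ to $z$. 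The main obstacle is the rigidity step: one must identify $\tilde\gamma$ as a length-minimizer and then exploit Riemannian smoothness together with geodesic uniqueness to convert a topological separation into geometric rigidity, which is the one place where the Riemannian hypothesis on $S$ is genuinely used.
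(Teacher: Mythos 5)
The paper does not actually prove this lemma: it is quoted from \cite{BIVZ}, so there is no argument in the present text to compare yours against, and I can only assess your proposal on its own terms. Its core is sound. The reduction to the two arcs $A'$, $A''$ of $T_y$ (each component of $S\setminus U$ is adjacent to $y$, since distinct segments meet only at their endpoints and $U$ is thus a simple closed curve), the bound $\lambda\bigl(\overline{A'}\bigr)\le\pi$ coming from criticality of $y$ with respect to $x$, and the rigidity step (a segment from $y$ to $z\in S'$ meeting $U$ at a point $q\neq y$ yields a concatenation of length $\rho(y,z)$, hence a geodesic smooth at $q$, hence by uniqueness of geodesics its initial direction is $\tau_i$) are all correct, and they do give $D(z)\subset\overline{A'}$ and non-criticality whenever $\lambda\bigl(\overline{A'}\bigr)<\pi$.

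The three places you leave as assertions do complete, and all by the same smooth-concatenation trick, so I see compression rather than a genuine gap; for completeness here is how. Borderline case $\lambda\bigl(\overline{A'}\bigr)=\pi$: if $\tau_1\in D(z)$, the corresponding segment runs along $\sigma_1$, passes through $x$ (because $z\notin U$) and continues with a tail $e_1$ from $x$ to $z$, so $\rho(y,z)=\rho(y,x)+\rho(x,z)$; then $\sigma_2$ reversed followed by $e_1$ is also minimizing, hence smooth at $x$, forcing $\sigma_1$ and $\sigma_2$ to have the same direction at $x$, impossible for distinct segments. The same trick gives your ``direct count'': if $z$ is interior to a segment $\sigma_0$ from $x$ to $y$, any segment from $y$ to $z$ other than $\sigma_0$ restricted to $[y,z]$, followed by $\sigma_0$ from $z$ to $x$, would be a minimizer with a corner at $z$; so the segment from $y$ to $z$ is unique and $z\notin Q_y^{-1}$. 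Finally, the ``suitable pair'' in the last clause: if $z$ lies on no segment from $x$ to $y$, then no segment from $y$ to $z$ can have initial direction in $D(x)$ (it would run through $x$ and the argument just given applies), so some such direction lies in a component $(\alpha_1,\alpha_2)$ of $T_y\setminus D(x)$; take $\sigma_1,\sigma_2$ to be the segments with directions $\alpha_1,\alpha_2$. By the core rigidity that segment avoids $(\sigma_1\cup\sigma_2)\setminus\{y\}$, so $z$ lies in the component on the $(\alpha_1,\alpha_2)$ side, while any further segment from $x$ to $y$ meets $\sigma_1\cup\sigma_2$ only at $x$ and $y$ and starts at $y$ on the other side, hence lies entirely in the other component; thus the component of $z$ is segment-free and your main statement applies. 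With these checks written out, your proof is complete.
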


Lemma \ref{sep} shows, in particular, that on many surfaces there are points 
which are critical with respect to precisely one other point.

\begin{lm}
\label{0}
An orientable surface $S$ is homeomorphic to the sphere ${\rm S}^2$ if and only if 
each point in $S$ is critical with respect to precisely one other point of $S$.
\end{lm}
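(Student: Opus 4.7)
The plan is to handle the two directions separately, each time playing Lemma \ref{sep} off against the topology of $S$.

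For the forward direction ($S \simeq \mathrm{S}^2 \Rightarrow {\rm card}\,Q_y^{-1}=1$), I fix $y \in S$ and pick some $x \in Q_y^{-1}$ using Lemma \ref{1}; the goal is to apply the ``in particular'' clause of Lemma \ref{sep}. The key observation is that two distinct segments $\sigma_1, \sigma_2$ from $x$ to $y$ can meet only at $x$ and $y$: if they shared an interior point $z$, the concatenation of $\sigma_1$ from $x$ to $z$ with $\sigma_2$ from $z$ to $y$ would be a length-minimizer with a corner at $z$, contradicting the smoothness of Riemannian minimizing paths. Hence $\sigma_1 \cup \sigma_2$ is a Jordan curve in $S \simeq \mathrm{S}^2$, and the Jordan curve theorem forces it to disconnect $S$. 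Since $\sigma_1, \sigma_2$ were arbitrary, Lemma \ref{sep} yields $Q_y^{-1} = \{x\}$.

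For the reverse direction, arguing by contraposition, I assume $S$ is orientable of genus $\geq 1$ and aim to produce a $y$ with ${\rm card}\,Q_y^{-1}\geq 2$. Reading the forward argument backwards, the hypothesis ${\rm card}\,Q_y^{-1}=1$ for every $y$ forces, through Lemma \ref{sep}, every pair of segments joining $y$ to its unique critical predecessor to separate $S$; in other words, every such pair must bound a nullhomologous (mod $2$) closed curve. On a surface with $H_1(S;\mathbb{Z}/2)\neq 0$ this is very restrictive. My concrete route would be: fix any $x$, use a Lusternik--Schnirelmann-type lower bound (of $3$) on ${\rm card}\,Q_x$ for positive-genus surfaces to produce distinct $y_1,y_2,y_3 \in Q_x$, and then exploit the nontrivial cyclic part $C^{cp}(x)$ of the cut locus---whose loops are non-separating on a positive-genus surface---to produce, for some $y_i$, a pair of segments from $x$ to $y_i$ whose union is non-separating. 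The failure of Lemma \ref{sep}'s hypothesis then leaves room for a second critical predecessor $x' \neq x$ of $y_i$, contradicting ${\rm card}\,Q_{y_i}^{-1}=1$.

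The main obstacle is unambiguously the reverse direction. The forward implication is essentially a one-line consequence of Lemma \ref{sep} plus the Jordan curve theorem, once one notes that distinct segments meet only at their endpoints. Going the other way requires converting the purely local statement ``${\rm card}\,Q_y^{-1}=1$ everywhere'' into the global topological conclusion $S \simeq \mathrm{S}^2$. The delicate step is promoting the mere topological existence of non-separating curves on a positive-genus surface into the existence of a concrete pair of length-minimizing segments whose union does not separate; this demands fine control over the cyclic part of the cut locus and is where orientability enters decisively.
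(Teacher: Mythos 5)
The paper never proves this lemma itself: it is quoted from \cite{BIVZ} (it is Theorem~2 there), so there is no internal proof to compare your proposal with; I can only judge the proposal on its own. Your forward direction is correct and complete: for $x\in Q_y^{-1}$ there are at least two segments from $y$ to $x$, two distinct segments meet only at their endpoints (and in any case the union of two distinct arcs with common endpoints contains a Jordan curve), such a curve separates the sphere, so the ``in particular'' clause of Lemma \ref{sep} gives $Q_y^{-1}=\{x\}$, with existence supplied by Lemma \ref{1}.

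The reverse direction, however, has a genuine gap at its decisive step. Lemma \ref{sep} is only a \emph{sufficient} condition for $Q_{y}^{-1}=\{x\}$; exhibiting a pair of segments from $x$ to $y_i$ whose union does not separate $S$ merely makes that lemma inapplicable, it does not produce a second point $x'\neq x$ with $y_i\in Q_{x'}$. Your phrase ``leaves room for a second critical predecessor'' is precisely the hard content of the implication and is not an argument: you would need a converse-type statement (non-separating pair of segments forces ${\rm card}\,Q_{y}^{-1}\geq 2$), which is nowhere established and is essentially the theorem itself. Two further points: the invoked ``Lusternik--Schnirelmann-type lower bound of $3$ on ${\rm card}\,Q_x$'' for positive genus is a nontrivial claim that is neither proved in this paper (which only gives ${\rm card}\,Q_y^{-1}\geq 1$) nor obviously available for the non-smooth functions $\rho_x$, and even if granted it is never actually used to close the argument. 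Your observation that cycles contained in a cut locus are non-separating is correct (a segment from $x$ meets $C(x)$ only at its endpoint), but by itself it only negates the hypothesis of Lemma \ref{sep}. To finish, one needs a positive mechanism that manufactures two elements of some $Q_y^{-1}$ when the genus is positive --- for instance the winding-number/cycle analysis on (the cyclic part of) the cut locus carried out in \cite{BIVZ} and echoed in the proof of Theorem \ref{odd_1} of this paper --- rather than the mere failure of a separation hypothesis.
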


%%%%%%%%%%%%%%%%%%%%%%%%%%%%%%%%%%%%%%%%%%%%%%%%%%%%%%%%%%%%%%%%%%%%

\section{A general result}

We prove in this section a result for arbitrary Alexandrov surfaces, which in particular holds for (Riemannian) surfaces.
Before giving it, we recall a result in graph theory.
All graphs we consider in the following are finite, connected, and may have loops and multiple edges.

\begin{lm}
\label{mnq}
Let $G$ be a connected graph with $m$ edges, $n$ vertices and $q$ generating cycles.
Then 
\\i) $m-n+1=q$;
\\ii) $m \leq 3(q-1)$ and $n \leq 2(q-1)$, with equality if and only if $G$ is cubic.
\end{lm}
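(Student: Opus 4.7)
The plan is to handle the two parts independently. Part (i) is a classical identity for the cyclomatic number of a connected graph, and I would prove it by choosing a spanning tree $T \subset G$. The tree $T$ contains exactly $n-1$ edges, so there are $m-(n-1) = m-n+1$ non-tree edges. Each non-tree edge $e$, together with the unique path in $T$ joining its endpoints, determines a ``fundamental cycle,'' and a standard argument (every cycle is a symmetric difference of fundamental cycles; the fundamental cycles are linearly independent because each contains an edge not belonging to any other fundamental cycle) shows that these $m-n+1$ cycles form a basis of the cycle space. Hence $q = m-n+1$.

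For part (ii), I would first observe that the two inequalities presuppose that every vertex of $G$ has degree at least $3$. Without this, the bounds fail trivially (a tree or a simple cycle already violates them), and with it they are forced by the handshake lemma. This degree-$\geq 3$ condition is exactly the condition satisfied by the graphs to which the lemma will be applied later, namely those extracted from cyclic parts of cut loci by taking ramification points as vertices and internal edges as edges. Granting this, I sum degrees:
\[
2m \;=\; \sum_{v \in V(G)} \deg(v) \;\geq\; 3n.
\]
Substituting $n = m - q + 1$ from part (i) yields $2m \geq 3(m-q+1)$, which rearranges to $m \leq 3(q-1)$. Then $n = m - q + 1 \leq 3(q-1) - q + 1 = 2(q-1)$.

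Finally, for the equality case, equality in $2m \geq 3n$ holds if and only if $\deg(v)=3$ for every vertex $v$, i.e., precisely when $G$ is cubic. If $G$ is cubic then $2m=3n$, and both inequalities become equalities via the substitution above. Conversely, equality in either of $m \leq 3(q-1)$ or $n \leq 2(q-1)$ forces $2m = 3n$, hence $G$ is cubic.

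I do not anticipate a real obstacle: part (i) is a textbook computation, and part (ii) reduces to one application of the handshake lemma once the minimum-degree hypothesis is made explicit. The only thing that requires care is flagging (or adopting as tacit, consistent with the applications to cut loci that follow) the fact that every vertex has degree at least three; this is what drives the otherwise unjustified bound.
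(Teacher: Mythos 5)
Your proof is correct and follows essentially the same route as the paper: part (i) is the standard spanning-tree/cycle-space identity, and part (ii) is deduced from (i) together with the degree count $2m=\sum_v \deg(v)\geq 3n$, with equality exactly when $G$ is cubic. You are also right to flag the tacit minimum-degree-$3$ hypothesis, which the paper's terse ``$m$ and $n$ are maximal if and only if $G$ is cubic'' leaves implicit and which holds for the graphs $C^{cp}(y)=(V,E)$ to which the lemma is later applied.
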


\begin{proof}
The equality (i) is well known. For the inequalities (ii), fix $q$. It follows from the first part that $m$ and $n$ are maximal if and only if $G$ is cubic. 
In this case we have $3n=2m$ and we obtain $n=2(q-1)$, $m=3(q-1)$.
\end{proof}

\bigskip

Recall that a point $y$ in an Alexandrov surface is called {\it smooth} if
$\lambda (T_y) =2 \pi$, where $T_y$ is the space of tangent directions at $y$ (as defined, for example, in \cite{BGP}).

For the simplicity of our exposition, we see every graph $G$ as an $1$-dimensional simplicial complex.

\begin{thm}
\label{con}
Let $y$ be a smooth point on an orientable Alexandrov surface $S$ of genus $g$.

If $g=0$ then ${\rm card} Q_y^{-1} =1$.

If $g\geq 1$ then $\sharp_y^2 \leq 6g-3$ and $\sharp_y^3 \leq 4g-2$; this yields ${\rm card} Q_y^{-1} \leq 10g-5$.
\end{thm}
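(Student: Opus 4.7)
The plan is to realise $Q_y^{-1}$ as a combinatorial substructure of the cyclic part $C^{cp}(y)$ of the cut locus, and then apply Lemma \ref{mnq}.

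For $g=0$ the claim ${\rm card}\,Q_y^{-1}=1$ is Lemma \ref{0}. For $g\geq 1$, I would first record that every $x\in Q_y^{-1}$ lies in $C(y)$, because the very definition of $y$ being critical with respect to $x$ forces at least two segments from $y$ to $x$. Type-$2$ points of $Q_y^{-1}$ then lie in the interior of edges of $C(y)$, while type-$3$ points sit at ramification points. Since $S\setminus C^{cp}(y)$ is by definition an open disk, Euler applied to this cellular decomposition gives $V-E+1=2-2g$, so the first Betti number of the connected finite local tree $C^{cp}(y)$ equals $2g$. Lemma \ref{mnq}(ii) with $q=2g$ then yields at most $4g-2$ ramification points and at most $6g-3$ internal edges in $C^{cp}(y)$, with equality exactly when $C^{cp}(y)$ is cubic.

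To obtain the stated bounds one needs to inject the type-$3$ points of $Q_y^{-1}$ into the ramification points of $C^{cp}(y)$, and the type-$2$ points into the internal edges of $C^{cp}(y)$ with at most one critical point per edge. For the first injection, I would argue that every cut-locus edge incident to a type-$3$ point $x\in Q_y^{-1}$ belongs to the cyclic part: otherwise two of the three segments from $y$ to $x$, together with the tree-part edge at $x$, could be combined, via Lemma \ref{sep}, to produce a separating loop contradicting ${\rm card}\,Q_y^{-1}>1$, or else to violate the no-gap-greater-than-$\pi$ condition at $y$. This puts all cut-locus edges at $x$ inside $C^{cp}(y)$, so $x$ has degree at least $3$ there and is a genuine ramification point of $C^{cp}(y)$.

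The main obstacle I expect is the at-most-one-per-edge claim for type-$2$ points. As $x$ traverses an internal edge of $C^{cp}(y)$, the two segments from $y$ to $x$ vary continuously, hence so does the angle $\theta(x)$ they form at $y$; the claim reduces to showing that $\theta(x)=\pi$ at most once along the edge. I would attempt a monotonicity argument, either for $\theta(x)$ itself via a first-variation or Gauss--Bonnet computation on the bigon these two segments bound, or for the signed area of that bigon as $x$ moves. Should strict monotonicity fail in some Alexandrov pathology, one can fall back on Lemma \ref{sep}: two type-$2$ witnesses on the same edge would give rise to a pair of loops whose interaction at $y$ produces a separating sub-configuration, forbidden by ${\rm card}\,Q_y^{-1}>1$. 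Combining the two injections delivers $\sharp_y^3\leq 4g-2$ and $\sharp_y^2\leq 6g-3$, and hence ${\rm card}\,Q_y^{-1}\leq 10g-5$.
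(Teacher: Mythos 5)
Your overall skeleton coincides with the paper's (reduce to the graph $C^{cp}(y)$, whose cycle rank is $2g$, apply Lemma \ref{mnq}, and inject type-$3$ points into vertices and type-$2$ points into edges, at most one per edge; the $g=0$ case via Lemma \ref{0} and the Euler-characteristic computation are fine). But the step you yourself flag as ``the main obstacle'' --- at most one critical point in the interior of each edge --- is precisely the heart of the theorem, and neither of your two proposed strategies closes it. The first-variation/Gauss--Bonnet monotonicity of $\theta(x)$ is never carried out, and it is not the mechanism the paper uses. The actual argument is short and elementary: since $y$ is smooth, $\lambda(T_y)=2\pi$, so a point $x$ interior to an edge (having exactly two segments to $y$) is critical if and only if the two tangent directions $x',x''$ of these segments are diametrally opposite on $T_y$; and since $S$ is orientable, the preimage of the open edge under the exponential map consists of two arcs of $T_y$ traversed in opposite senses, so for any other interior point $x_*$ of the same edge the circular order is $x',x'_*,x''_*,x''$ or $x',x''_*,x'_*,x''$ --- in either case $x'_*,x''_*$ lie in an open half-circle and cannot be antipodal. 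Note that smoothness and orientability enter exactly here, and your sketch invokes neither at this point (orientability appears in your text only through the genus count). Worse, your fallback via Lemma \ref{sep} is unworkable: for a point interior to an edge of the \emph{cyclic part}, the union of its two segments to $y$ is in general an essential, non-separating loop, so Lemma \ref{sep} gives nothing. The flat torus built from two equilateral triangles is a concrete counterexample to the idea that two type-$2$ witnesses force a separating configuration: its cut locus is a $\Theta$-graph and each of the three edges carries a type-$2$ critical point, the corresponding segment pairs closing up to non-separating geodesic loops (this is exactly the example the paper uses to show card$Q_y^{-1}=5$ is attained for $g=1$).

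A secondary gap concerns the containment $Q_y^{-1}\subset C^{cp}(y)$. You argue, and only vaguely, that a type-$3$ critical point must be a ramification point of the cyclic part, but you never exclude type-$2$ critical points lying on tree parts of $C(y)$ outside $C^{cp}(y)$; such points are not interior to internal edges of $C^{cp}(y)$, so your injection of type-$2$ points is incomplete as stated. The paper disposes of this issue wholesale at the outset: if some $x\in Q_y^{-1}$ has two segments to $y$ whose union disconnects $S$ (as happens when $x$ sits off the cyclic part), Lemma \ref{sep} collapses $Q_y^{-1}$, so one may assume from the start that $Q_y^{-1}\subset C^{cp}(y)$ (and, for simplicity of exposition, that $C(y)=C^{cp}(y)$). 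With that reduction in place, the counting via Lemma \ref{mnq} with $q=2g$ is exactly as you wrote; what is missing from your proposal is a correct proof of the one-per-edge claim, which is where the hypotheses ``$y$ smooth'' and ``$S$ orientable'' do their work.
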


For any point $y$ on the standard projective plane, $Q^{-1}_y = Q_y$ is a circle, so one cannot drop the orientability condition in Proposition \ref{con}.

The restriction to smooth points in Theorem \ref{con} is essential, too.
Indeed, for any surface $S$ with a conical point $y$, if $\lambda (T_y)\le\pi$ then $Q_y^{-1}=S\setminus\{y\}$.
See \cite{v3} for properties of the sets $M_y^{-1}$ and $Q_y^{-1}$ in case $\pi \leq \lambda (T_y) < 2\pi$.

\bigskip

\begin{proof}
The case $g=0$ is covered by Lemma \ref{0}, so we may assume $g\geq 1$.
And in the virtue of Lemma \ref{sep}, we may consider only points $y \in S$ with $Q_y^{-1} \subset C^{cp}(y)$.

Assume for simplicity of the exposition that $C(y)= C^{cp}(y)$.

Consider $C^{cp}(y)=(V,E)$ as a graph, 
with $V=C_3^{cp}(y)$ and $E$ the set of components of $C^{cp}(y) \setminus V$. 
Call the elements of $V$ {\it vertices}, and the elements of $E$ {\it edges}.

We claim that the interior of each edge $I$ of $C^{cp}(y)$ contains at most one point $x \in Q_y^{-1}$.
To see this, assume there exists some point $x \in Q_y^{-1}$ interior to $I$.
Then there are two segments from $x$ to $y$, making at $y$ the angle $\pi$.
Since $y$ is smooth, $\lambda (T_y) = 2\pi$ and therefore the two images $x', x''$ of $x$ on $T_y$ are diametrally opposite.
Let $x_* \neq x$ be another point in the interior of  $I$, with images $x'_*, x''_*$ on $T_y$.
Since $S$ is orientable, the order on $T_y$ is either $x', x'_*, x''_*, x''$ or $x', x''_*, x'_*, x''$. In both cases $x'_*, x''_*$ cannot be diametrally opposite, hence $x_* \not\in Q_y^{-1}$.

Then, since $C^{cp}(y)$ has $2g$ generating cycles, Lemma \ref{mnq} gives $\sharp_y^2 \leq 6g-3$ and $\sharp_y^3 \leq 4g-2$, which together imply
${\rm card} Q_y^{-1} = \sharp_y^2 + \sharp_y^3 \leq 10g-5$.
\end{proof}

\bigskip

Notice that this upper bound on card $Q_y^{-1}$ is imposed only by the topology of $S$. 
We shall refine it in Section \ref{upb} by local geometrical considerations.

%%%%%%%%%%%%%%%%%%%%%%%%%%%%%%%%%%%%%%%%%%%%%%%%%%%%%%%%%%%%%%%%%%%%

\section{Two generic results}

For the proof of Theorem \ref{odd_1}, we shall make use of the main result in \cite{Bu}, 
that we complete in the following with a new statement of independent interest. 
Notice that this result doesn't require orientability of the surface.
See \cite{Bu} for the definition of cut locus stable metrics.

\begin{thm}
\label{generic_Cx}
Let $S$ be a surface and $y$ a point in $S$.

The set ${\cal C}^y$ of cut locus (with respect with $y$) stable metrics on $S$ is open and dense in ${\cal G}$. 
For any ${\mathfrak g}$ in ${\cal C}^y$, every ramification point of the cut locus $C(y)$ with respect to ${\mathfrak g}$ is joined to $y$ by precisely three segments.

There exists a set ${\tilde {\cal C}}^y$ of cut locus (with respect with $y$) stable metrics on $S$, open and dense in ${\cal G}$, such that 
for any ${\tilde{\mathfrak g}}$ in ${\tilde {\cal C}}^y$, 
every ramification point $x$ of the cyclic part of the cut locus $C(y)$ with respect to ${\mathfrak g}$ is joined to $y$ by precisely three segments, 
no two of them of opposite tangent directions at $x$ or at $y$.
\end{thm}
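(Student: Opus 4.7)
The plan is to leverage Buchner's theorem, which already yields openness, density, and degree-$3$ ramifications for cut locus stable metrics, and then refine the construction so as to exclude further non-generic configurations. For the first assertion, I would take ${\cal C}^y$ to be Buchner's set itself and supply only the upgrade from ``degree $3$'' to ``precisely three segments''. This follows from the standard local description of the cut locus: at a non-conjugate cut point $x \in C(y)$, the branches of $C(y)$ meeting at $x$ are in bijection with the consecutive pairs among the minimizing segments from $y$ to $x$, via the bisector structure of $C(y)$ near $x$; in particular the local degree at $x$ equals the number of segments. Cut locus stability rules out $x$ being conjugate to $y$, since arbitrarily small perturbations near a conjugate cut point alter the homeomorphism type of $C(y)$. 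Hence Buchner's ``degree $3$'' translates to ``three segments from $y$ to $x$''.

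For the second assertion, I would start from ${\cal C}^y$ and further exclude two degeneracies at ramification points of the cyclic part $C^{cp}(y)$: for some $x \in C_3^{cp}(y)$ and some pair $\gamma_i, \gamma_j$ among the three segments from $x$ to $y$, either (A) the tangent directions of $\gamma_i$ and $\gamma_j$ at $x$ are antipodal in $T_x$, or (B) their tangent directions at $y$ are antipodal in $T_y$. Each such antipodality is the single scalar equation $\cos \angle = -1$ on the geometric data at the endpoints, and its negation is manifestly open. Since $C_3^{cp}(y)$ is finite for every ${\mathfrak g} \in {\cal C}^y$, only finitely many such equations arise per metric, and the problem reduces to establishing density of the avoiding set.

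To prove density I would argue by a surgical perturbation. Given ${\mathfrak g} \in {\cal C}^y$ with an offending antipodal pair $\gamma_i, \gamma_j$ at some $x$, choose a small ball $B$ centered at an interior point of $\gamma_i$ and disjoint from $y$, from $x$, from every other segment joining $y$ to a ramification point of $C^{cp}(y)$, and from the rest of $C(y)$. A $C^\infty$-small metric perturbation supported in $B$ bends $\gamma_i$ slightly while leaving every other relevant segment intact; by the openness of cut locus stability the perturbed metric remains in ${\cal C}^y$. The angles of $\gamma_i$ with $\gamma_j$ at $x$ and at $y$ vary smoothly and non-constantly with such perturbations, so a generic choice breaks the antipodality. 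Iterating over the finitely many offending pairs yields ${\tilde {\mathfrak g}} \in {\tilde{\cal C}}^y$ arbitrarily close to ${\mathfrak g}$.

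The main obstacle I anticipate is the surgical nature of the perturbation: it must deform exactly one segment without altering the global topology of $C(y)$, the set $C_3^{cp}(y)$, or the number of segments at any other ramification point. I would handle this by choosing $B$ small enough for a quantitative form of Buchner's stability to apply, so that the combinatorial structure of $C(y)$ is preserved, while verifying through a first-order variation of geodesics that the angle functional at the endpoints of $\gamma_i$ depends non-trivially on the supported perturbation.
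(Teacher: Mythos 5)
Your first assertion and your openness argument are essentially in the spirit of the paper (the paper attributes the three--segment statement to Buchner and gets openness by observing that the degenerate configurations give geodesic loops and that limits of geodesic loops are geodesic loops). The genuine problem is in your density argument. A ramification point $x$ of $C(y)$ is joined to $y$ by three segments \emph{of equal length}. If you perturb the metric only in a small ball $B$ in the interior of the offending segment $\gamma_i$, you change the length of the minimizing path near $\gamma_i$ while $\gamma_j,\gamma_k$ keep their old length; the equality of the three lengths is an exact condition, not an open one, so it is destroyed, the triple point does not stay at $x$ but moves to a nearby point $x'$, and \emph{all three} segments at $x'$ (not just the perturbed one) are slightly displaced. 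Hence your claim that the perturbation ``leaves every other relevant segment intact'' and that one only has to track how the angle of $\gamma_i$ at $x$ and at $y$ varies is not correct as stated: the quantity you must control is the angle at the displaced ramification point between the displaced segments, and your appeal to ``quantitative stability'' plus a first-order variation is not carried out at this point, which is exactly where the work lies. Showing that this near-$\pi$ angle is actually different from $\pi$ after the move is not automatic.

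The paper resolves precisely this difficulty by a two-step construction that keeps the ramification point fixed: first an \emph{asymmetric} bump along $\gamma^1$, which forces the new shortest path $\tilde\gamma^1$ to go around on one prescribed side, so its tangent directions at $x$ and at $y$ rotate strictly and no longer make an angle $\pi$ with $\gamma^2$ or $\gamma^3$; and then compensating bumps along $\gamma^2$ and $\gamma^3$, chosen so that the new shortest paths $\tilde\gamma^2,\tilde\gamma^3$ keep the \emph{same} tangent directions at $x$ (and at $y$) as before but acquire the \emph{same length} as $\tilde\gamma^1$. This re-equalization is the missing idea in your proposal: it guarantees that $x$ remains a ramification point with exactly three segments whose directions are explicitly controlled, after which the finiteness of $C_3^{cp}(y)$ lets one iterate, as you intended. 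Without some device of this kind (or a genuine argument that the angle at the displaced triple point leaves the value $\pi$ under a suitably chosen perturbation), your density step has a gap.
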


\begin{proof}
The first part of the theorem is proved by Buchner in \cite{Bu}.

Consider now a metric ${\mathfrak g}$ in ${\cal C}^y$ and a point $x \in C_3^{cp}(y)$, hence it is joined to $y$ by precisely three segments,
say $\gamma^1, \gamma^2, \gamma^3$.
Assume that the tangent directions of $\gamma^1$ and $\gamma^2$ at $x$ are opposite, so they form a {\it geodesic loop}.
Since the limit of geodesic loops is a geodesic loop, the set of all such metrics is closed in ${\cal G}$, and its complement ${\tilde {\cal C}}_x^y$ is open.

We prove now the density of ${\tilde {\cal C}}_x^y$ in ${\cal G}$.
In order to do it, we approximate ${\mathfrak g}$ in two steps. 

First we ``put a bump'' to slightly cover $\gamma^1$, assymetrically with respect to the left and right parts of $\gamma^1$.
Consequently, in a neighbourhood of the image set of $\gamma^1$ (on $S$), 
there is a unique shortest path $\tilde{\gamma}^1$ from $x$ to $y$ with respect to the new metric ${\mathfrak g}'$;
$\tilde{\gamma}^1$ is a little longer than $\gamma^1$ and, more importantly, it makes no angle of $\pi$ with $\gamma^2$ or $\gamma^3$.

Second, we put bumps on $\gamma^2$ and $\gamma^3$, such that the obtained metric ${\mathfrak g}''$ has the following properties.
In respective neighbourhoods of the image sets of $\gamma^2$ and $\gamma^3$ (on $S$), 
there are unique shorthest paths $\tilde{\gamma}^2$, $\tilde{\gamma}^3$ from $x$ to $y$, with respect to ${\mathfrak g}''$. 
They have the same respective tangent directions at $x$ as $\gamma^2$ and $\gamma^3$ and, moreover, they have the same length as $\tilde{\gamma}^1$.

One can proceed similarly for the tangent directions at $y$.

Since $C_3^{cp}(x)$ is a finite set, after finitely many such procedures we get a metric ${\tilde{\mathfrak g}} \in {\tilde {\cal C}}^y$
approximating ${\mathfrak g}$, with the desired properties.
\end{proof}

\begin{thm}
\label{odd_1}
If $S$ is an orientable surface and $y$ a point in $S$ then, for a generic Riemannian metric on $S$, 
$y$ is critical with respect to an odd number of points in $S$.
\end{thm}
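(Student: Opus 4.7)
By Theorem \ref{generic_Cx}, I first work with a metric $\tilde{\mathfrak g} \in \tilde{\cal C}^y$: the cut locus $C^{cp}(y)$ is a cubic graph with $4g-2$ vertices and $6g-3$ edges, each ramification point joined to $y$ by exactly three segments with no two of opposite tangent directions at $y$. By Lemma \ref{sep}, I may assume $Q_y^{-1} \subset C^{cp}(y)$. The orientability argument from the proof of Theorem \ref{con} shows each edge contains at most one type-2 critical point and each vertex at most one type-3 critical point, so $|Q_y^{-1}|=\sharp_y^2+\sharp_y^3 \le 10g-5$; note that $10g-5=5(2g-1)$ is odd, so in particular the upper bound itself has the desired parity.

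The plan is to show that the parity of $|Q_y^{-1}|$ is invariant along generic one-parameter paths $g_t$ in $\tilde{\cal C}^y$. The codim-one bifurcations one must consider are: (i) at some ramification vertex $r$, two of the three tangent directions at $y$ become antipodal (the condition defining $\tilde{\cal C}^y$ fails in codimension one); (ii) the graph $C^{cp}(y)$ changes combinatorial type, typically via a transient degree-four vertex. For (i), $r$ flips between type-3 critical and non-critical; on the edge $e$ between the two bifurcating sectors at $r$, the two tangent directions $\tau_1(s),\tau_2(s)$ at $y$ of the two segments from $y$ to a point of $e$ (parametrized by arclength $s$ along $e$) have a strictly monotone angular difference, since the two sides of $e$ are parametrized in $T_y$ with opposite orientations by orientability of $S$. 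Hence a type-2 critical point on $e$ is simultaneously created or destroyed, and enumerating the sub-cases by monotonicity direction gives the net change in $|Q_y^{-1}|$ as $0$ or $\pm 2$. For (ii), the finite list of resolutions of the transient higher-degree vertex is treated case by case, using the same monotonicity to track the creation/destruction of type-2 points on the reconnecting edges and the change of type-3 status at the new vertices. Two type-2 critical points cannot coalesce on a single edge, by the orientability argument already used in the proof of Theorem \ref{con}.

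For the base case, $g=0$ is handled by Lemma \ref{0}: $|Q_y^{-1}|=1$, which is odd. For $g \ge 1$ I would exhibit an explicit generic metric attaining the upper bound $|Q_y^{-1}|=10g-5$, e.g.\ a small symmetric perturbation of a flat (for $g=1$) or hyperbolic (for $g \ge 2$) reference metric for which the involution exchanging the two sides of every edge of $C^{cp}(y)$ can be written down explicitly, realizing all $6g-3$ edges and all $4g-2$ vertices as critical. Since $\cal G$ is convex and hence path-connected, the parity invariance propagates the odd parity of this reference metric to every metric in $\tilde{\cal C}^y$. The main obstacle I foresee is the bifurcation analysis of case (ii): each combinatorial resolution of a transient degree-four vertex requires a separate local verification, and one must ensure that the chosen one-parameter path meets all these codim-one degenerations transversely and avoids higher-codimension ones.
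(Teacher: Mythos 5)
Your strategy (parity as an invariant under generic one-parameter deformations of the metric, anchored by an explicit base example and propagated by path-connectedness of ${\cal G}$) is genuinely different from the paper's proof, which is a direct argument valid for every metric in the open dense set of Theorem \ref{generic_Cx}: one lifts the situation to $T_y$, forms the vertex- and edge-cycles of \cite{BIVZ}, uses $\sum_i w(C_i)\equiv w(T_y)=1 \pmod 2$, observes that genericity (degree three, no antipodal pairs of directions at $y$) makes every cycle simple so $w(C_i)\in\{0,\pm 1\}$, and then shows each cycle with $w(C_i)\neq 0$ meets $Q_y^{-1}$ in exactly one point, distinct cycles giving distinct points. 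However, as it stands your proposal has two genuine gaps. First, the base case is not only unproved but, as formulated, impossible: you propose a generic metric with ${\rm card}\,Q_y^{-1}=10g-5$, yet the white-$Y$ argument in Step 1 of the proof of Theorem \ref{g} shows ${\rm card}\,Q_y^{-1}\leq 8g-4$ for every metric when $g\geq 2$ (a white $Y$ forces all remaining edges to be black), and $8g-4<10g-5$ for $g\geq 2$. Any odd-cardinality example would of course suffice for your scheme, but producing one is itself nontrivial (compare the tower constructions of Theorem \ref{diam}, which give $4g+1$), and without it your argument has no anchor; note the paper needs no such example at all.

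Second, the heart of your argument --- the classification of codimension-one degenerations along a generic path $g_t$, the transversality statement that a generic path meets only these and avoids higher-codimension strata, and the local constancy of ${\rm card}\,Q_y^{-1}$ between bifurcations --- is asserted rather than proved. Buchner's theorem as used in the paper (Theorem \ref{generic_Cx}) concerns stability of the cut locus for a single metric; controlling how $C(y)$ and $Q_y^{-1}$ change along one-parameter families (your case (ii), transient vertices of degree four, together with the possibility of critical points sliding through vertices, or of $C^{cp}(y)$ itself changing while $Q_y^{-1}\not\subset C^{cp}(y)$ momentarily) is a substantially stronger structural result that you neither prove nor cite, and you acknowledge it as the main obstacle. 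Your monotonicity observation on an edge (the orientability argument of Theorem \ref{con}) does handle the local analysis in case (i), but it does not by itself deliver the global bookkeeping that every admissible bifurcation changes ${\rm card}\,Q_y^{-1}$ by an even number. Until the base case and the one-parameter bifurcation analysis are actually supplied, the proposal is an outline rather than a proof; the paper's winding-number argument reaches the conclusion directly and avoids both issues.
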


During our proof we shall refer to the proof of Theorem 1 in \cite{BIVZ}.

\bigskip

\begin{proof}
Consider a metric on $S$ as in Theorem \ref{generic_Cx}.
We will identify here $T_y$ with a Euclidean circle of centre {\bf 0} and length $\lambda (T_y) = 2\pi.$

\bs

If $S$ is homeomorphic to the sphere then the statement follows from Lemma \ref{0}. Assume this is not the case.

A finite number of cycles were defined in \cite{BIVZ} to prove Lemma \ref{1}, 
by joining points in $T_y$ corresponding to the vertices in $C_3^{cp}(y)$ by line segments or arcs in $T_y$.
Next we indicate a geometrical interpretation (i.e., an equivalent definition) for some of those cycles, useful for our purpose.

The injectivity radius inj$(S)$ is positive. Therefore, for any $\varepsilon >0$ sufficiently small, there is a natural identification $\Phi$ 
of $T_y$ to the boundary ${\rm bd} N_\varepsilon$ of the $\varepsilon$-neighbourhood $N_\varepsilon$ of $C(y)$ in $S$.
Choose a point $x \in C_3^{cp}(y)$. For each segment $\gamma_x$ from $x$ to $y$, take the (first) point $z_{\gamma_x}$ in $\gamma_x \cap {\rm bd} N_\varepsilon$.
The set of all these points $z_{\gamma_x}$ has ${\rm deg} x = {\rm card} c^{-1}(x)$ components, each of which is a point or an arc.
(Recall that $c$ is the restriction of the exponential map to $T_y$.)
Join with segments the extremities of consecutive -- with respect to some circular order -- components.
The simple closed curve $C_x$ thus constructed corresponds, by the use of $\Phi^{-1}$, to the cycle $C_i$ determined by $c^{-1}(x)$, and is called a {\it vertex-cycle}. 
Moreover, the boundary of every component of $N_\varepsilon \setminus \bigcup_{x \in C_3^{cp}(x)} {\rm int} C_x$ yields, 
again by the use of $\Phi^{-1}$, a cycle $C_i$ determined by consecutive points $\alpha, \beta$ in $c^{-1} \left( C_3^{cp}(y) \right)$, and is called an {\it edge-cycle}.

Let $C_1,..., C_n$ be all these cycles.

If ${\bf 0}\in\cup_{j=1}^nC_j$ then, for some $x\in C_3^{cp}(y)$, there are two segments of diametrally opposite tangent directions in $T_y$, 
see \cite{BIVZ}.

If ${\bf 0}\not\in\cup_{j=1}^nC_j$ consider, as in \cite{BIVZ}, 
the winding number $w(C_j)=w({\bf 0}, C_j)$ of every cycle $C_j$ with respect to ${\bf 0}$. We have
$$\sum_{i=1}^n w \left( C_i \right) = w \left( \sum_{i=1}^n C_i \right) = w \left( T_y \right) =1
\hspace{5mm} ({\rm mod} \; 2),$$
because each edge not in $T_y$ is used exactly twice. This shows that $w(C_i) \neq 0$ for some cycle $C_i$.

If this cycle $C_i$ is an edge-cycle then, because $S$ is orientable, a semi-continuity argument shows that its corresponding edge in $C_3^{cp}(y)$ contains at least one point in $Q^{-1}_y$, see \cite{BIVZ} for details.

If $C_i$ is a vertex-cycle, $w(C_i) \neq 0$ means that {\bf 0} is surrounded by $C_i$, which is impossible if ${\bf 0}\notin {\rm conv} C_i$. By construction, conv$C_i=$ conv$c^{-1}(x)$ for some $x\in C_3^{cp}(y)$.

\bigskip

By Theorem \ref{generic_Cx}, the vertices of $C(y)$ all have degree three. This and the orientability of $S$ show now that all cycles $C_i$ considered above are simple closed curves, hence $w(C_i) \in \{0, \pm1\}$. Therefore, because 
$\sum_{i=1}^n w(C_i) = 1 \; ({\rm mod} \; 2)$, the number of cycles $C_i$ with $w(C_i) \neq 0$ is odd.
Each such $C_i$ intersects $Q_y^{-1}$.

We claim that, if non-zero, card$\left( C_i \cap Q_y^{-1} \right)=1$.
This is clear for the cycles determined by vertices of $C_3^{cp}(y)$, because these cycles have precisely three sides.
Consider now a cycle $C_i$ determined by an edge $e$ of $C_3^{cp}(y)$. Then, because  $S$ is orientable, 
$C_i$ has the form $\alpha_+\beta_-\beta^-\alpha^+$, with $\alpha_+\beta_-$ and $\alpha^+\beta^-$ of contrary orientations on $T_y$.
Take $x \in C_i \cap Q_y^{-1} \neq \emptyset$ and define
$l(x)=c^{-1}(x) \cap \alpha_+\beta_-$ and $r(x)=c^{-1}(x) \cap \alpha^+\beta^-$.
Of course, $l(x)$ and $r(x)$ contain each a single tangent direction for $x \in C(y) \setminus C_3(y)$, 
and at least one of them has at least two tangent directions for $x \in C_3(y)$.
In any case, let $l_{\alpha}(x)$ be the tangent direction in $l(x)$ closest to $\alpha_+$ along the arc $\alpha_+\beta_-$, 
and let let $l_{\beta}(x)$ be the tangent direction in $l(x)$ closest to $\beta_+$ along the same arc $\alpha_+\beta_-$; 
possibly $l_{\alpha}(x) = l_{\beta}(x)$. 
Similarly, let $r_{\alpha}(x)$, $r_{\beta}(x)$ be the tangent directions in $r(x)$ closest to $\alpha^+$, respectively $\beta^-$, along the arc $\alpha^+\beta^-$.
By definition, the angle between $l_{\alpha}(x)$ and $r_{\alpha}(x)$ towards $\alpha_+$ is at most $\pi$, 
as is the angle between $l_{\beta}(x)$ and $r_{\beta}(x)$ towards $\beta_+$. 
Because $S$ is orientable, for $z \in e \setminus \{x\}$ both $l(z)$ and $r(z)$ are inside precisely one of the above two angles, 
hence $z \not\in Q_y^{-1}$ and the claim follows.

\bs

The metric we considered is, by Theorem \ref{generic_Cx}, such that for any $x \in C_3^{cp}(y)$ and 
any two segments $\gamma, \gamma'$ joining $y$ to $x$, the angle of $\gamma, \gamma'$ at $y$ satisfies $\angle \gamma \gamma' \neq \pi$. 
Therefore, for any two cycles $C_i$ with $w(C_i) \neq 0$, the points $Q_y^{-1} \cap C_i$ are different and thus card$Q_y^{-1}$ is odd.
\end{proof}

%%%%%%%%%%%%%%%%%%%%%%%%%%%%%%%%%%%%%%%%%%%%%%%%%%%%%%%%%%%%%%%%%%%%

\section{Torus case}
\label{exp}

In this section we show that the statement of Theorem \ref{odd_1} is sharp, in the sense pointed out by Theorem \ref{even}.

We will use the following result of A. D. Weinstein (Proposition C in \cite{W}).

\begin{lm}
\label{Wein}
Let $M$ be a $d$-dimensional Riemannian manifold and $D$ a $d$-disc embedded in $M$.
There exists a new metric on $M$ agreeing with the original metric on a neighborhood 
of $M \setminus ({\rm interior \; of \;} D)$ such that, for some point $p$ in $D$, 
the exponential mapping at $p$ is a diffeomorphism 
of the unit disc about the origin in the tangent space at $p$ to $M$, onto $D$.
\end{lm}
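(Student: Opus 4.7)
The plan is to reduce to a purely local construction carried out in a coordinate chart. Pick any point $p$ in the interior of $D$ and fix a diffeomorphism $\varphi$ from a neighbourhood of $D$ onto a neighbourhood of the closed Euclidean unit ball $\overline{B}_1\subset \mathbb{R}^d$, with $\varphi(p)=0$ and $\varphi(D)=\overline{B}_1$. Let $g_0$ denote the push-forward of the original metric. The task becomes: build a Riemannian metric $\tilde g$ on $\overline{B}_1$ that agrees with $g_0$ on a collar $\{1-\epsilon\le r\le 1\}$ (so that, when pulled back via $\varphi^{-1}$ and left unchanged outside $D$, we obtain a smooth metric on $M$ agreeing with the original on a neighbourhood of $M\setminus \mathrm{int}(D)$), and for which $\exp_{0}^{\tilde g}$ restricted to the $\tilde g$-unit disc in $T_0\mathbb{R}^d$ is a diffeomorphism onto $\overline{B}_1$.

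I would construct $\tilde g$ in two pieces. On a concentric core disc $\overline{B}_{1-2\epsilon}$, take a rotationally symmetric warped-product metric $\tilde g = dr^2+f(r)^2 d\sigma^2$, where $d\sigma^2$ is the round metric on $S^{d-1}$ and $f$ is smooth with $f(0)=0$, $f'(0)=1$, and $f>0$ on $(0,1-2\epsilon]$; these conditions make $\tilde g$ smooth at the origin and force every radial ray from $0$ to be a unit-speed geodesic of the new metric. In polar coordinates, $\exp_0^{\tilde g}$ restricted to the $\tilde g$-ball of radius $1-2\epsilon$ is then literally the identity map, hence a diffeomorphism onto the core. In the interpolation annulus $1-2\epsilon\le r\le 1-\epsilon$ set $\tilde g = \chi(r)\,g_{\mathrm{radial}} + (1-\chi(r))\,g_0$ for a smooth cut-off $\chi$ with $\chi\equiv 1$ near $r=1-2\epsilon$ and $\chi\equiv 0$ near $r=1-\epsilon$, choosing the value of $f$ at $r=1-2\epsilon$ so that $g_{\mathrm{radial}}$ matches the mean circumferential scale of $g_0$ there and the interpolation is a small $C^\infty$-perturbation of $g_0$ across the annulus.

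The main obstacle is controlling $\exp_{0}^{\tilde g}$ through the interpolation region: one must verify that the radial geodesics of $\tilde g$ emanating from $0$ continue as nearly-radial $\tilde g$-geodesics into and through the annulus, reach $\partial\overline{B}_1$ at $\tilde g$-length exactly $1$, and sweep $\overline{B}_1$ bijectively without developing conjugate points. This is handled by a Weinstein-style smallness argument: taking $\epsilon$ small compared with the injectivity radius of $g_0$ near $\partial D$ makes $\tilde g$ a $C^\infty$-small perturbation of an already-good model metric outside the core, so the exponential map perturbs to another diffeomorphism by the inverse function theorem applied at each radial direction, and a final rescaling of $f$ (together with a reparametrisation of $r$) normalises the $\tilde g$-distance from $0$ to $\partial\overline{B}_1$ to be exactly $1$ along every ray. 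Transporting $\tilde g$ back to $M$ via $\varphi$ and extending by the original metric outside a neighbourhood of $\mathrm{int}(D)$ yields the desired metric.
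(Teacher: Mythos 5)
There is a genuine gap, and it sits exactly at the one point where this lemma is hard. Note first that the paper does not prove this statement at all: it is quoted as Proposition~C of Weinstein's paper \cite{W}, so your argument has to stand on its own. The whole difficulty of the lemma is that the metric may not be touched on a collar of $\partial D$, where it is the arbitrary given metric $g_0$, and yet $\exp_p$ must carry the \emph{closed} unit ball diffeomorphically \emph{onto} $D$ --- in particular every unit-speed geodesic from $p$ must cross that untouched collar, meet $\partial D$ transversally at time exactly $1$, with no focusing and no two rays hitting the same boundary point. Your construction does not control this. You work in an arbitrary chart $\varphi$, so the coordinate spheres $\{r=c\}$ and coordinate-radial directions have no relation to $g_0$-geodesics; once a radial geodesic of your core metric enters the region where $\tilde g=g_0$, it is a $g_0$-geodesic with uncontrolled behaviour (it may bend, fail to reach $\partial D$ in the expected time, hit it tangentially, or collide with another ray). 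The claim that ``taking $\epsilon$ small makes $\tilde g$ a $C^\infty$-small perturbation of an already-good model'' is unfounded: $\chi(r)\,g_{\mathrm{radial}}+(1-\chi(r))\,g_0$ is not $C^\infty$-close to $g_0$ just because a ``mean circumferential scale'' is matched (anisotropy and $dr\,d\theta$ cross terms of $g_0$ are not matched, and in the transition annulus your radial curves are not even geodesics), and there is no model metric satisfying the conclusion to perturb from --- such a model would already have to agree with $g_0$ near $\partial D$, which is precisely what is to be achieved. Finally, the normalisation of the arrival time to exactly $1$ by ``rescaling $f$ and reparametrising $r$'' cannot work: the discrepancies in length are direction-dependent and are partly created in the collar where the metric is frozen, while a rescaling of $f$ is rotationally symmetric and confined to the core.

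A workable repair has to adapt the coordinates to $g_0$ rather than to an arbitrary chart: take the inward normal exponential (Fermi) collar of $\partial D$, in which $g_0=dt^2+h_t$ with $t$ the $g_0$-distance to $\partial D$, so that the $t$-curves are $g_0$-geodesics meeting $\partial D$ orthogonally and crossing the collar with a common length. Extend these coordinates to polar coordinates on $D\setminus\{p\}$ and build the new metric in the form $ds^2+H_s$ (no cross terms, unit radial coefficient), interpolating $H_s$ between a smooth warped product $f(s)^2d\sigma^2$ near $p$ and $h_t$ near the collar; then \emph{all} radial curves are unit-speed geodesics of the glued metric, they all reach $\partial D$ at the same parameter (normalised to $1$ by choosing the depth of $p$), and $\exp_p$ is the polar parametrisation itself, hence a diffeomorphism of the closed unit ball onto $D$. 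This is essentially Weinstein's argument; your interpolation-plus-inverse-function-theorem scheme, as written, does not establish the surjectivity onto $D$, the injectivity, or the nondegeneracy of $\exp_p$ at the boundary, which are the substance of the lemma.
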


\begin{thm}
\label{even}
For any point $y$ on the torus $T$ there exist sets of metrics ${\cal E}^y_i$ on $T$, $i=1,2,3,4,5$, such that
${\rm card}Q_y^{-1}=i$ with respect to any metric ${\mathfrak g} \in {\cal E}^y_i$ and, moreover,
${\rm int}{\cal E}^y_j \neq \emptyset$ for $j=1,3,5$, while ${\cal E}^y_k$ contains continuous families of metrics for $k=2,4$.
\end{thm}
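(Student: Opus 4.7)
The plan is to exhibit, for each odd $i\in\{1,3,5\}$, an explicit metric $\mathfrak{g}_i$ on $T$ realizing ${\rm card}\,Q_y^{-1}=i$ stably, and then to obtain the even cases $i\in\{2,4\}$ by identifying bifurcations along continuous paths joining adjacent odd examples.

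For $i=5$, I would take a flat hexagonal torus: its cut locus $C(y)$ is a theta-graph with two degree-$3$ vertices where the three segments to $y$ meet at $2\pi/3$-angles, so every vertex-gap is strictly less than $\pi$ and both vertices are critical. Along each of the three edges, the signed angle $\phi(p)$ between the two segment directions to $y$ at $p$ is a continuous function that takes values on opposite sides of $\pi$ at the two endpoints (namely $4\pi/3$ and $2\pi/3$, by the $2\pi/3$-symmetry of the vertex configurations), so by the intermediate value theorem it crosses $\pi$ transversally at a unique interior critical point. This gives exactly $5$ critical points; since the relevant conditions---strict inequalities on vertex-gaps and transversal zero-crossings of $\phi-\pi$ on each edge---are open in the $C^\infty$-Whitney topology, ${\rm int}\,\mathcal{E}^y_5\neq\emptyset$.

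For $i=1$, I would invoke Lemma \ref{Wein} to build a metric that is a round $2$-sphere base with a tiny handle attached far from $y$; letting $x$ denote the sphere-antipode of $y$, the two great-circle meridians from $x$ to $y$ remain the only segments and their union $U$ bounds two topological discs on $T$, neither of which meets any segment from $x$ to $y$. Lemma \ref{sep} then yields $Q_y^{-1}=\{x\}$, and this configuration is clearly stable. For $i=3$, I would start from $\mathfrak{g}_5$ and apply Lemma \ref{Wein} on a disc straddling two of the three theta-graph edges, so as to push the interior maxima of $\phi$ on those two edges strictly below $\pi$, thereby eliminating two edge critical points while preserving both vertex critical points and the third edge critical point; this yields $5-2=3$ stable critical points.

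For $i\in\{2,4\}$, I would join $\mathfrak{g}_{i-1}$ to $\mathfrak{g}_{i+1}$ by a continuous $C^\infty$-path in $\mathcal{G}$. Along the path, Theorem \ref{odd_1} implies that ${\rm card}\,Q_y^{-1}$ is odd on a dense open subset of the parameter, and must therefore jump by $\pm 2$ at bifurcation instants, where two edge critical points coalesce as $\phi$ becomes tangent to $\pi$; at such an instant the cardinality equals $i$. The bifurcation locus is locally the codimension-$1$ zero set in $\mathcal{G}$ of the real function $g\mapsto \max_{p\in e}\phi_g(p)-\pi$ for the relevant edge $e$, and hence contains continuous $1$-parameter families of metrics, giving the required continuous families in $\mathcal{E}^y_i$. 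The principal technical hurdle is the construction of $\mathfrak{g}_3$: destroying exactly two of the five hexagonal critical points while preserving the remaining three in an open manner and avoiding additional bifurcations elsewhere on $T$ will demand a carefully localised Weinstein bump.
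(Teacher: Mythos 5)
Your $i=5$ example (flat hexagonal torus, openness by strictness and transversality) is fine and matches what the paper itself observes for flat parallelogram tori. The decisive gap is $\mathfrak{g}_3$, and that step cannot work as described. If the (cyclic part of the) cut locus of $y$ on an orientable genus-$1$ surface is a theta-graph with ramification points $a,b$ — and with two degree-3 ramification points it must be a theta, since two disjoint loops cannot carry $H_1$ of the torus — then cutting along it yields a hexagon whose boundary visits the lifts of $a$ and $b$ alternately, so the six segment directions at $y$ occur on $T_y$ in the order $a_1,b_1,a_2,b_2,a_3,b_3$ and the two lifts of each edge are opposite arcs. Consequently, along any edge the angle at $y$ between the two segment directions, measured consistently, equals at one endpoint the sum of two gaps of one vertex (that is, $2\pi$ minus the remaining gap) and at the other endpoint a single gap of the other vertex. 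If both vertices are critical with all gaps strictly below $\pi$ — exactly what you insist on preserving — these endpoint values are respectively $>\pi$ and $<\pi$, so by the intermediate value theorem \emph{every} one of the three edges contains an interior point of $Q_y^{-1}$, and ${\rm card}\,Q_y^{-1}\geq 5$. Thus ``$5-2=3$ while keeping both vertex critical points'' is impossible however carefully the bump is localised; a stable $3$-configuration must sacrifice at least one critical vertex or change the combinatorics, which your proposal does not provide. Note also that Lemma \ref{Wein} is not a device for pushing angle maxima below $\pi$: in the paper it only serves to smooth a cone point while preserving the directions of the segments to the relevant point.

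The $i=1$ argument contains false claims as well, and the even cases inherit the damage. On a round sphere with a small handle far from $y$, the antipode $x$ is joined to $y$ by a continuum of meridian segments, not two, and the two lunes bounded by any two meridians are filled with further segments, so the hypothesis of Lemma \ref{sep} (a component meeting no segment from $x$ to $y$) fails; the conclusion ${\rm card}\,Q_y^{-1}=1$ is plausible, but it needs a direct argument (segments from $y$ to any point other than $x$ leave $y$ within a small arc of directions), and stability is not ``clear'' because $x$, with its one-parameter family of segments, is itself a degenerate configuration that breaks up under perturbation. For $i=2,4$ you rely on paths joining $\mathfrak{g}_1,\mathfrak{g}_3,\mathfrak{g}_5$: with $\mathfrak{g}_3$ missing both paths collapse, and even with valid endpoints an integer-valued count obeys no intermediate value theorem — you must exhibit the degeneration explicitly and verify that at the transition metric exactly one tangential contact with angle $\pi$ occurs and nothing else changes. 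The paper proceeds quite differently: it constructs an explicit metric with ${\rm card}\,Q_y^{-1}=2$ by cutting a suitable polygon out of the hyperbolic plane, gluing its edges, and smoothing the resulting cone points via Lemma \ref{Wein} so that the relevant segment directions are preserved; continuous families come from varying the parameters of that construction, and the remaining cardinalities are stable adaptations. Some explicit construction of this kind is what your even cases (and $\mathfrak{g}_3$) still require.
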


\begin{proof}
We indicate next a construction to get card$Q_y^{-1}=2$, but it can be easily adapted to obtain the conclusion.
(The stability of the respective constructions under small perturbations, for $j=1,3,5$, provides the non-empty interior.)

\medskip

Consider, in the hyperbolic plane ${\rm I\! H}$ of constant curvature $-1$, a circle $C$ of centre $y$ and radius $r$,
with $r>0$ a parameter to be chosen later. Figure \ref{fig0} illustrates in the plane our contruction.

\begin{figure*}
\label{fig0}
\centering
 \includegraphics[width=0.7\textwidth]{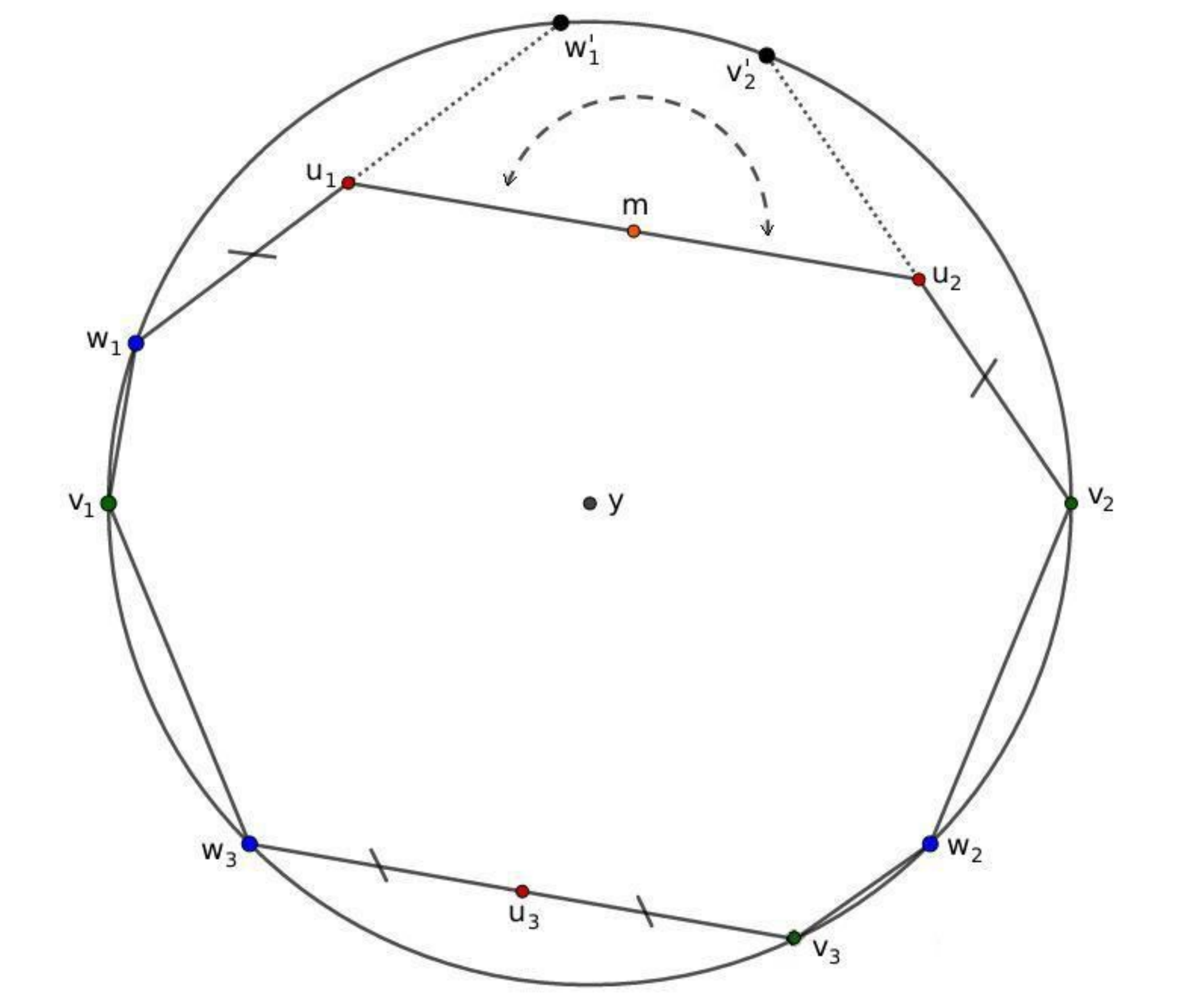}
\caption{Construction for a point $y$ on a torus, with card$Q_y^{-1}=2$.}
\end{figure*}

Consider points $v_1$ and $v_2$ diametrically opposite on $C$.
On one of the half-circles bounded by $v_1$ and $v_2$ consider points $w_3, u_3, w_2$ such that $v_2, w_2, v_3, w_3, v_1$ are in circular order
and $\lambda(v_1w_3)=\lambda(v_2w_2)>\lambda(v_3w_3)$.
On the other half-circle consider points $w_1, w_1', v_2'$ such that $\lambda(v_1w_1)=\lambda(v_3w_2)$ and 
$\lambda(w_1w_1')=\lambda(v_3w_3)=\lambda(v_2v_2')$. 
Of course, we may choose $w_2$ such that $v_1, w_1, w_1', v_2', v_2$ are in circular order.
Let $u_1$ be the mid-point of $w_1w_1'$, $u_2$ the mid-point of $v_2v_2'$, $m$ the mid-point of $u_1u_2$, and $u_3$ the mid-point of $v_3w_3$.

We may choose $r$ such that the total angle $\theta_v$ at $v$ verifies $\theta_v:=\angle w_1 v_1 w_3 + \angle u_2 v_2 w_2 + \angle w_2 v_3 w_3 =2\pi$.

Cut the polygon $v_1w_1u_1mu_2v_2w_3v_3u_3w_3v_1$ out from ${\rm I\! H}$ and naturally identify (glue along) the edges $mu_1$ and $mu_2$.
Further naturally identify the edges in the following pairs: $v_1w_1$ and $v_2w_2$, $v_2w_2$ and $v_1w_3$, $w_1u_1$ and $w_3u_3$, and $v_2u_2$ and $v_3u_3$.

Denote by $v$ the common image of $v_1,v_2,v_3$, by $w$ the common image of $w_1,w_2,w_3$,
by $u$ the common image of $u_1,u_2,u_3$, by $y$ the image of $y$, and by $m$ the image of $m$, via the above glueing procedure.

The resulting closed surface is a torus $T''$ with conical singularities at the points $m$ (where $\theta_m =\pi$),
$u$ (where $\theta_u =\angle w_1 u_1 m + \angle m u_2 v_2 + \pi > 2\pi$), and
$w_1$ (where $\theta_{w_1} =\angle v_1 w_1 u_1 + \angle v_2 w_2 v_3 + \angle v_3 w_3 v_1 > 2\pi$).

Smoothen first $T''$ locally around $m$ and $w$ to obtain a surface $T'$ with unique singularity at $u$.
Of course, small changes around those points do not affect the segments from $y$ to $v$ or $u$.
Moreover, because the directions of the segments from $w$ to $y$ were all included in an open half-circle of $T_y$, 
this property will remain true for all points in $T' \setminus T''$.

Next we show how to smoothen $T'$ around $u$.
Consider a metrical $\e$-neighbourhood $U_\e$ of $u$ on $T'$, of boundary length $l=l(\e, \theta_u) = \lambda(\partial U_\e)$.
Consider some $\alpha <-1$ such that, on the hyperbolic plane of constant curvature $\alpha$, 
the geodesic ball $D$ of radius $\e$ has boundary length precisely $l$.
Cut $U_\e$ off $T'$ and replace it by $D$. Also denote by $u$ the center of $D$ after the replacement.
By Lemma \ref{Wein}, there exists a torus $T$ whose metric outside a neighbourhood of $D$ coincides with the metric on $T'$,
and such that the directions of the segments from $u$ to $p=y$ remain the same as those on $T'$.
Therefore, on the obtained Riemannian surface $T$ we have $Q_y^{-1}=\{v,u\}$.

Of course, continuous changes of the positions of $w_1,w_2,w_3$ yield continuous families of metrics the with the desired property.
\end{proof}

%%%%%%%%%%%%%%%%%%%%%%%%%%%%%%%%%%%%%%%%%%%%%%%%%%%%%%%%%%%%%%%%%%%%

\section{An upper bound for card$Q_y^{-1}$}
\label{upb}

\begin{thm}
\label{g}
Let $S$ be an orientable (Riemannian) surface of genus $g$ and $y$ a point in $S$.

If $g=0$ then ${\rm card} Q_y^{-1} =1$, and if $g=1$ then ${\rm card}Q_y^{-1} \leq 5$.

If $g\geq 2$ then ${\rm card} Q_y^{-1} \leq 8g-5$.
\end{thm}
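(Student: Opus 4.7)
The cases $g=0$ and $g=1$ are immediate: $g=0$ follows from Lemma \ref{0}, and $g=1$ is precisely Theorem \ref{con} applied to the Riemannian torus (every point of a Riemannian surface being smooth), since $10g-5=5$. The substantive task is $g\geq 2$, where I must sharpen the Alexandrov bound $10g-5$ of Theorem \ref{con} to $8g-5$, saving exactly $2g$.

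For $g \geq 2$, my plan is to first reduce to $Q_y^{-1}\subset C^{cp}(y)$ via Lemma \ref{sep}, since otherwise ${\rm card}\,Q_y^{-1}=1$ and we are done. Next I approximate the given metric by a generic metric ${\tilde{\mathfrak g}}\in{\tilde{\cal C}}^y$ using Theorem \ref{generic_Cx}, and verify that ${\rm card}\,Q_y^{-1}$ is upper semi-continuous under such perturbations---in a limit, critical points can only merge, not multiply, so proving the bound for generic metrics suffices. Finally, for a generic metric I invoke the cycle framework from the proof of Theorem \ref{odd_1}: there are $n=4g-2$ vertex-cycles and $m=6g-3$ edge-cycles $C_i$, each carrying a winding number $w(C_i)\in\{-1,0,+1\}$ about ${\bf 0}\in T_y$ with $\sum_i w(C_i)=1$, and the identity ${\rm card}\,Q_y^{-1}=\#\{i:w(C_i)\neq 0\}$ holds.

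The heart of the proof, and the main obstacle, is to show that for $g\geq 2$ at least $2g$ of these $n+m=10g-5$ cycles have winding zero. My plan is to exploit the genus-$g$ topology of the $\varepsilon$-neighbourhood $N_\varepsilon$ of $C(y)$: the cycles tile $N_\varepsilon$, realizing its Euler characteristic $1-2g$, and each of the $2g$ generators of $H_1(N_\varepsilon)$ should produce a cell whose boundary, transported to the Euclidean disc bounded by $T_y$ via $\Phi^{-1}$, is contractible in $T_y\setminus\{{\bf 0}\}$ and hence has zero winding. Lemma \ref{sep} should serve as an auxiliary tool, ruling out critical configurations whose disconnecting-loop structure conflicts with the required homological setup. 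Carrying out this extraction canonically is the delicate part, and the argument must respect the asymmetry confirmed by Theorem \ref{even}: the weaker bound $5$ for $g=1$ is already sharp, so the additional reduction genuinely appears only from $g\geq 2$ onwards.
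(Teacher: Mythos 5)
Your $g=0$ and $g=1$ cases are fine, but for $g\geq 2$ there are two genuine gaps. First, the reduction to generic metrics does not work in the direction you need. What is true is that limits of critical points are critical, i.e.\ the set-valued map ${\mathfrak g}\mapsto Q_y^{-1}({\mathfrak g})$ is upper semicontinuous as a set; this permits a non-generic metric to possess critical points (typically type-$2$ points where the angle of the two segments at $y$ touches $\pi$ without crossing it) that simply disappear under arbitrarily small perturbation, so ${\rm card}\,Q_y^{-1}$ for the given metric may exceed that of every nearby metric in ${\tilde{\cal C}}^y$. A bound proved only for generic metrics therefore does not transfer. The paper avoids this: in Step 1 it proves the bound $8g-4$ for \emph{all} metrics, and where it does perturb (to make $C^{cp}(y)$ cubic) it explicitly tracks what can go wrong -- a white vertex of degree $>3$ may split into black vertices -- and compensates with the bookkeeping inequality (\ref{b}), rather than invoking any blanket semicontinuity of the cardinality.

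Second, the heart of your plan -- that each of the $2g$ generators of $H_1(N_\varepsilon)$ produces a cycle of zero winding, giving $2g$ zero-winding cycles among the $10g-5$ -- is unsubstantiated, and the proposed mechanism cannot be right as stated: on a flat torus with parallelogram fundamental domain one has ${\rm card}\,Q_y^{-1}=5=10g-5$, so \emph{all} five cycles have nonzero winding, and this situation is stable (Theorem \ref{even}, ${\cal E}^y_5$ has nonempty interior). A homological argument assigning one zero-winding cell per generator would apply verbatim at $g=1$ and is refuted there; you acknowledge the tension but give no mechanism that actually distinguishes $g\geq 2$. The paper's saving comes from an entirely different, local-combinatorial observation: colour an edge of the cubic graph $C^{cp}(y)$ white if it meets $Q_y^{-1}$; if the three edges at a vertex are all white, the corresponding antipodal direction-pairs exhaust $T_y$ and no further edge can be white, so for $g\geq 2$ at least one third of the $6g-3$ edges are black, yielding $\sharp$ of white edges $\leq 4g-2$ and hence ${\rm card}\,Q_y^{-1}\leq (4g-2)+(4g-2)=8g-4$. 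The final improvement to $8g-5$ is not obtained by finding one more zero-winding cycle directly, but by the parity argument of Theorem \ref{odd_1} (an even cardinality forces ``double'' points, hence non-maximality). Your proposal is missing both of these ingredients, and the $2g$-cycles claim it relies on would, if true, even contradict the sharpness of the bound $5$ in genus $1$.
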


\begin{proof}
The proof consists of two steps. First we prove directly that ${\rm card} Q_y^{-1} \leq 8g-4$, and 
afterward we invoke Theorem \ref{odd_1} to decrease that upper bound by $1$.

\medskip

{\it Step 1.}
In the virtue of Lemma \ref{sep}, we may consider only points $y \in S$ with $Q_y^{-1} \subset C^{cp}(y)$.

As in the proof of Theorem \ref{con}, we consider $C^{cp}(y)=(V,E)$ as a graph, 
with vertex set $V=C_3^{cp}(y)$ and edge set $E$ the set of components of $C^{cp}(y) \setminus V$. 
By Theorem \ref{con}, each edge of $C^{cp}(y)$ contains at most one interior point $x \in Q_y^{-1}$,
so $\sharp_y^2 \leq 6g-3$, $\sharp_y^3 \leq 4g-2$, and $\sharp_y^2 + \sharp_y^3 \leq 10g-5$.

Notice that this upper bound on card $Q_y^{-1}$ is imposed by the topology of $S$. 
We refine it next by local geometrical considerations.

\medskip

For the graph $C^{cp}(y)=(V,E)$, call an edge {\it white} if it intersects $Q_y^{-1}$, and {\it black} if it doesn't. 
A vertex is {\it white} if it belongs to $Q_y^{-1}$, and {\it black} otherwise.
A $Y$ is the subgraph of $C^{cp}(y)$ formed by a vertex $x$ of degree three and three edges issuing at $x$.

\medskip

Assume first that $C^{cp}(y)$ is a cubic graph.

We claim that, if there exists a white $Y$ in $C^{cp}(y)$, then no other edge is white.
To see this, assume the edges $e_{kl}$, $e_{km}$ and $e_{kn}$ are white and share a common extremity, say $v_k$.
Then the images on $T_y$ of the vertices incident to these edges respect the circular order
$v_l, v_k, v_m$, $v_n, v_k, v_l$, $v_m, v_k, v_n$. 
Since the images of each edge in the white $Y$ contain opposite points with respect to the centre of $T_y$, there is no place for other white edges.

Thus, if card$E=3$ then $S$ has genus $1$ (because it is orientable) and we get the upper bound ${\rm card} Q_y^{-1} \leq {\rm card} V + {\rm card} E =5$.
This is sharp, as one can easily see for a flat torus whose fundamental domain is a parallelogram.

If $g>1$ then, by our claim, at least one third of the edges are black.
Assuming all vertices are white, we obtain ${\rm card} Q_y^{-1} \leq 8g-4$.

\medskip

So we have obtained an upper bound ${\rm card}Q_y^{-1} \leq B_3(g)=8g-4$ if $C^{cp}(y)$ is a cubic graph.
We treat now the general case, in order to obtain an upper bound ${\rm card}Q_y^{-1} \leq B(g)$ 
with no restriction on the degree of vertices in $V$.

Slightly modify the metric ${\mathfrak g}$ of $S$ around the vertices of $C^{cp}(y)$ of degree larger than three to obtain a new metric ${\mathfrak g}'$ on $S$ close to ${\mathfrak g}$, with the following properties: every vertex in $C^{cp}(y)({\mathfrak g}')$ has degree three, and 
every white edge of $C^{cp}(y)({\mathfrak g})$ is still white in $C^{cp}(y)({\mathfrak g}')$.
This is possible by small perturbations of ${\mathfrak g}$ around (some of) the vertices 
$x \in C^{cp}(y)({\mathfrak g})$ with deg$x>3$ (see Theorem \ref{generic_Cx}).
Notice that, for ${\mathfrak g}'$ close enough to ${\mathfrak g}$, there cannot be more white edges in $C^{cp}(y)({\mathfrak g})$ than in $C^{cp}(y)({\mathfrak g}')$. As for the vertices, two or more black neighbours in $C^{cp}(y)({\mathfrak g}')$ may correspond to a white vertex of degree larger than $3$ in $C^{cp}(y)({\mathfrak g})$, which reduces to repaint in white at most half of the non isolated black vertices of $C^{cp}(y)({\mathfrak g})$. Thus, we get
\begin{equation}
\label{b}
B(g) \leq B_3(g) + \frac12 {\rm card} \left( V \setminus \left( Q_y^{-1} \cup
\left\{v \in V : v \; {\rm is} \; {\rm black} \; {\rm and} \; {\rm isolated} \right\} \right) \right) .
\end{equation}
Since our upper bound $B_3(g)$ assumes all vertices are white, the inequality (\ref{b}) gives
$$B(g) \leq B_3(g)=8g-4$$
and the proof of Step 1 is complete.

\medskip

{\it Step 2.}
Consider now metrics on $S$ as in Theorem \ref{odd_1}, hence the upper bound in this case is odd, namely
$B_3(g)^{{\rm odd}}=8g-5$.

The proof of Theorem \ref{odd_1} also shows that, if its cardinality is not odd, $Q_y^{-1}$ contains ``double'' points; i.e., points corresponding to several cycles. This, of course, implies that in case ${\rm card}Q_y^{-1}$ is even, $Q_y^{-1}$ doesn't have maximum number of elements. Therefore,
$B(g) \leq B_3(g)\leq B_3(g)^{{\rm odd}}=8g-5$ and the proof is complete.
\end{proof}

%%%%%%%%%%%%%%%%%%%%%%%%%%%%%%%%%%%%%%%%%%5

\section{Applications}

With the special case of mutually critical points deals \cite{Z2}. A
yet more particular case is that of pairs of points at distance equal
to the largest distance on $S$,
$$d(S)={\rm max}_{x,y\in S}\rho(x,y).$$
For any $x\in S$, we call $\rho_x ^{-1}(d(S))$ the {\it diametrally
opposite set} of $x$, if it is not void. 
In this case, the point $x$ itself is called {\it diametral}; of course, not every point is necessarily diametral.

Notice that  any diametrally opposite set verifies
$\rho_x^{-1}(d(S))\subset Q_x\cap Q_x^{-1}$ for any $x\in S$.

If $S$ is homeomorphic to S$^2$, every diametrally opposite set
contains a single point, by Theorem 1 in \cite{BIVZ} (see also \cite{vz2}). In the
standard projective plane, every point is diametral and any
diametrally opposite set is a circle. If $S$ is orientable, every
diametrally opposite set is finite, by Theorem \ref{con} or Theorem \ref{g}.

In analogy with the characterization provided by Theorem 2 in \cite{BIVZ} (given here as Lemma \ref{0}), 
we may think of a similar one imposing cardinality 1 for all diametrally opposite sets. But this condition is weaker.
Although surfaces homeomorphic to S$^2$ satisfy, by Theorem 1 in \cite{BIVZ}, the imposed condition, there are further examples of surfaces verifying it: 
any flat torus with a rectangular fundamental domain has only single-point diametrally opposite sets.

In any flat torus without a rectangular fundamental domain, the diametrally 
opposite set of every point $x$ has exactly $2$ points.

A direct consequence of Theorem \ref{g} is the following.

\begin{co}
\label{di}
For any point $y$ on an orientable surface of genus $g\ge 2$, the set $F^{-1}_y$ has at most $8g-5$ points. 
Hence any diametrally opposite set has at most $8g-5$ points.
\end{co}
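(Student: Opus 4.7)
The plan is to derive both assertions as immediate consequences of Theorem \ref{g}, via the chain of inclusions
$$\rho_y^{-1}(d(S)) \subseteq F_y^{-1} \subseteq Q_y^{-1}.$$
Once these are established, applying Theorem \ref{g} to the orientable surface $S$ of genus $g\ge 2$ gives $\mathrm{card}\,Q_y^{-1}\le 8g-5$, and monotonicity of cardinality under inclusion delivers both statements.

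For the second inclusion, I would verify the standard fact that an absolute maximum of $\rho_x$ is automatically critical with respect to $x$. If $y \in F_x$, then for any tangent direction $\tau$ at $y$, moving infinitesimally along $\tau$ cannot increase $\rho_x$; by the first variation formula, the one-sided derivative of $\rho_x$ along $\tau$ is computed along some segment $\gamma$ from $y$ to $x$ as $-\cos\angle(\tau,\dot\gamma(0))$, so nonpositivity forces the angle to be non-obtuse, i.e.\ $y\in Q_x$, equivalently $x\in Q_y^{-1}$. The first inclusion is tautological: if $\rho(x,y)=d(S)$, then $\rho_x$ attains its maximum at $y$, so $y\in F_x$ and $x\in F_y^{-1}$. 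Note that the paper has already recorded the companion remark $\rho_x^{-1}(d(S))\subset Q_x\cap Q_x^{-1}$ immediately before the corollary, so the criticality of farthest points is essentially on the table.

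There is no genuine obstacle here: the corollary is a direct packaging of Theorem \ref{g} with the trivial observation that farthest points are critical. The only thing to keep tidy is the distinction between $F_y^{-1}$ (all $x$ having $y$ as some farthest point) and $\rho_y^{-1}(d(S))$ (all $x$ at the global maximal distance from $y$), which is transparent from the definitions.
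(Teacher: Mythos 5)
Your proposal is correct and follows exactly the route the paper intends: the corollary is stated there as a direct consequence of Theorem \ref{g}, using precisely the inclusions $\rho_y^{-1}(d(S))\subseteq F_y^{-1}\subseteq Q_y^{-1}$ (the latter being the standard fact, already recorded before the corollary, that farthest points are critical). Your first-variation justification of that standard fact is fine, modulo stating it as: the one-sided derivative in direction $\tau$ is $-\max_\gamma\cos\angle(\tau,\dot\gamma(0))$ over all segments $\gamma$ from $y$ to $x$, whose nonpositivity yields a segment making a non-obtuse angle with $\tau$.
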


Concerning the tightness of Theorem \ref{g} (and Corollary \ref{di}) we obtain the following.

\begin{thm}
\label{diam}
There exist orientable (Riemannian) surfaces $\tilde{T}_g$ of genus $g$ with diametrally opposite sets consisting of $4g+1$ points, 
where $2g+1$ points are of type $2$ and $2g$ points are of type $3$.
\end{thm}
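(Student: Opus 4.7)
The plan is to construct $\tilde T_g$ inductively: start from a suitable hexagonal torus, which realises the case $g=1$, and then attach one small hexagonal handle per additional unit of genus, each handle contributing two new type-$2$ and two new type-$3$ diametrally opposite points of $y$ while leaving the older ones untouched.

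For $g=1$, take the flat regular hexagonal torus (quotient of ${\mathbb R}^2$ by a hexagonal lattice) and let $y$ be the centre of a fundamental hexagon $H$. The six vertices of $H$ identify into two vertex classes $v_1,v_2$, and the three pairs of opposite sides into three midpoint classes $m_1,m_2,m_3$; the cut locus $C(y)$ is the theta graph with vertex set $\{v_1,v_2\}$ and three edges through the $m_j$. In the flat metric $\rho_y(v_i)=R$ (the circumradius of $H$), reached by three segments from $y$, while $\rho_y(m_j)=R\sqrt{3}/2<R$, reached by two segments with antipodal tangent directions at $y$. I would apply the hyperbolic-cap surgery from the proof of Theorem~\ref{even}—replacing small $\varepsilon$-neighbourhoods of the $m_j$ by geodesic discs in a hyperbolic plane of sufficiently negative constant curvature, chosen so that the disc boundaries remain isometric to the original—to stretch $\rho_y(m_j)$ up to $R$ without altering distances from $y$ to any point outside the surgery regions. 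A final smoothing via Lemma~\ref{Wein} ensures the exponential maps at the $m_j$ remain diffeomorphisms on their unit discs. The resulting $\tilde T_1$ has $F_y^{-1}=\{v_1,v_2,m_1,m_2,m_3\}$ of cardinality $5$, with two type-$3$ and three type-$2$ points.

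For the inductive step, assume $\tilde T_{g-1}$ has been built with $y$ diametral, diameter $d$, and $|F_y^{-1}|=4g-3$ of the prescribed types. Pick a small topological disk $U\subset\tilde T_{g-1}$ disjoint from every diameter-realising segment, with $\rho_y|_U<d$. Attach inside $U$ a small handle by removing two disjoint subdisks of $U$ and gluing a thin cylinder along their boundaries; design the cylinder's metric as a ``hexagonal'' flat metric with hyperbolic-cap adjustments (as in Step $1$) so that two designated vertex-type classes and two designated midpoint-type classes on the handle lie at distance exactly $d$ from $y$ in the glued surface, while every other point of the handle is strictly closer. By placing the two gluing circles in $U$ on essentially opposite sides of $y$, the two segments from $y$ to each midpoint class pass through different circles and thereby leave $y$ in antipodal directions (type $2$), while each new vertex class is reached by three segments (type $3$). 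This increases the genus by one and augments $|F_y^{-1}|$ by four points with the required type distribution.

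The main obstacle is guaranteeing that the local surgeries do not produce unintended new diametrally opposite points and do not spoil the antipodality structure of the old type-$2$ points. I would handle this by keeping $U$ and the attached handle metrically small and geometrically isolated from every diameter-realising segment of $\tilde T_{g-1}$, so that the two gluing circles serve as ``gates'' through which every segment from $y$ into the handle must pass; the multiplicity and tangent configuration of such segments at $y$ are then dictated by the combinatorics of the hexagonal template on the handle. The freedom granted by Lemma~\ref{Wein}, together with the hyperbolic-cap replacement from Theorem~\ref{even}, is what lets us simultaneously prescribe the four target distances as $d$ and not produce any further critical behaviour.
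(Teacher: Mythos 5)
Your base case $g=1$ is essentially the paper's own construction (bumps over the three edge midpoints of a hexagonal fundamental domain; the paper also bumps $y$ itself), and that part is sound. The inductive step, however, has a genuine gap. A point $x$ with $\rho(y,x)=d(S)$ automatically satisfies $y\in Q_x$, so the segments from $y$ to $x$ must leave $y$ in directions not contained in an open half-circle of $T_y$ (exactly antipodal if there are only two of them). In your surgery, every segment from $y$ into the new handle must enter through one of the two gluing circles, and both of these lie inside the single disk $U$ that you required to be metrically small, disjoint from all diameter-realising segments and ``geometrically isolated''. For such a $U$ the segments from $y$ to points of $U$ generically leave $y$ in a narrow bundle of directions, so no point of the handle can be critical for $\rho$ at $y$ in the required sense, hence no handle point can lie at distance $d(S)$ from $y$ at all. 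Your fix --- ``placing the two gluing circles in $U$ on essentially opposite sides of $y$'' --- contradicts the smallness of $U$; to obtain antipodal departure directions at $y$ you would have to position $U$ across the cut locus $C(y)$ at a carefully chosen cut point, and even then you would still need to verify that the three segments to each intended type-$3$ point are suitably spread at $y$, and that the handle, which must have metric height roughly $d-\max_U\rho_y$ (so it is not metrically negligible), neither increases the diameter nor creates new farthest-point pairs among the new bumps and the old ones. None of these points is addressed, and they are precisely where the difficulty of the theorem lies.

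The paper circumvents this by a global, not local, construction: for $g\ge 2$ the surface $T_g$ is defined so that $T_g\setminus C(y)$ is a regular $6(2g-1)$-gon with all angles $2\pi/3$ in the hyperbolic plane, centred at $y$, with an explicitly prescribed identification of the boundary (written out for $g=2$ and then modified inductively by attaching the handle along the right-most \emph{edge of $C(y)$}, i.e.\ by inserting new labelled vertices into the boundary word). This makes $C(y)$ a tower-shaped graph with $2g+1$ type-$2$ and $2g$ type-$3$ points whose segments from $y$ arrive from the correct, combinatorially controlled positions on $T_y$; only afterwards are the distances equalised by bumps, as in the torus case. Your local handle attachment in a small disk would need a comparable mechanism to control the directions at $y$ of the segments to the new points, and as written it does not provide one.
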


\begin{proof}
For the case of surfaces homeomorphic to $S^2$, see Theorem 1 in \cite{BIVZ}.

Take now a flat torus with a parallelogram, union of two equilateral triangles with a common edge, as fundamental domain.

In such a torus, for any point $y$, $C(y)$ is a $\Theta$-shape graph. Cut
 along $C(y)$ and unfold to obtain a regular hexagon $v_1v_2v_1v_2v_1v_2$.
(If $y$ is taken to be the identified vertices of the parallelogram,
then $v_1$ and $v_2$ are the centres of the two triangles.)
Replace small discs of radius $\e$ about the midpoints $m_1, m_2, m_3$
 of the three distinct edges of the hexagon and about the centre (also
 denoted by) $y$ of the hexagon, by congruent bumps, all bounded by circles of
 length $2\pi\e$.
The bumps have centres $\tilde{m}_i$ and $\tilde{y}$ at distance
$\frac{1}{\sqrt{3}}-\frac{1} {2}+\e$ from the respective boundaries.
In this way we obtain a torus $\tilde{T}_1$, on which
$$\rho(\tilde{y}, v_1)=\rho(\tilde{y},v_2)=\rho(\tilde{y},\tilde{m}_1)
=\rho(\tilde{y},\tilde{m}_2)=\rho(\tilde{y},\tilde{m}_3)=
\frac{1}{\sqrt{3}}=d(\tilde{T}_1).$$
Thus, $\{v_1, v_2,\tilde{m}_1, \tilde{m}_2, \tilde{m}_3\}$ is a
diametrally opposite set of $\tilde{y}$.

\medskip

Next we define inductively surfaces $T_g$ for all $g \geq 2$, with the 
following properties.

The domain $D_g = T_g \setminus C(y)$ is a regular $6(2g-1)$-gon of
 centre $y$ in the hyperbolic plane of constant curvature $-1$, with the
 property that all its angles are $2 \pi/3$.

The cut locus of $y$ in $T_g$ is a --horizontally sitting-- tower-shape graph
 with $2g+1$ levels. Each level-edge provides a point in $Q_y^{-1}$ of type
 $2$, where from $\sharp_y^2 = 2g+1$, and each vertex of even level is of
 type $3$, so $\sharp_y^3 = 2g$.
Figure \ref{fig1} shows the case $g=2$, as well as where to attach a handle to
 $T_2$ in order to obtain the order of vertices on $D_3$.

\begin{figure*}
\label{fig1}
\centering
 \includegraphics[width=0.8\textwidth]{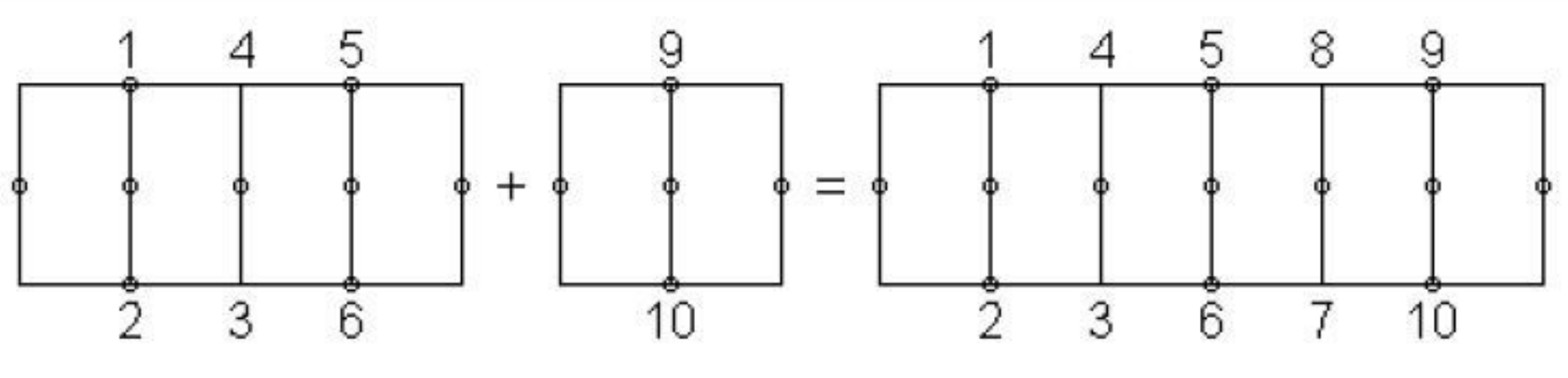}
\caption{Inductive construction for tower graphs. Glueing a surface $T_2$ of genus $2$ to a torus $T_1$, to obtain the surface $T_3$ of genus $3$: the right-most edge of $C(y)$ on $T_2$ is identified to the left-most edge of $C(y)$ on $T_1$, to get $C(y)$ on $T_3$. The points in $Q_y^{-1}$ are marked by small circles.}
\end{figure*}

To see that we can realize the tower-shape graphs as cut loci, it needs to 
specify how to identify (i.e., the order of) vertices and edges on $D_g$.

The domain $D_2=T_2 \setminus C(y)$ is a regular $18$-gon whose
 vertices, given in circular order, are $1$, $2$, $3$, $4$, $5$, $6$, $5$, $4$,
 $1$, $2$, $1$, $4$, $3$, $6$, $5$, $6$, $3$, $2$.
The edges, following the above order of vertices, are
$a$, $b$, $c$, $d$, $e$, $f$, $d$, $g$, $h$, $a$, $g$, $c$, $i$, $e$,
 $f$, $i$, $b$, $h$, see Figure \ref{fig2}.
Clearly, only the points $1$, $2$, $5$, $6$ are of type $3$,
and only the edges $a$, $c$, $e$, $f$, $h$ of $C(y)$ contain each
a point of type $2$, hence $\sharp_y^2 =5$, $\sharp_y^3 =4$, and
${\rm card}Q_y^{-1}=9$.

\begin{figure*}
\label{fig2}
\centering
 \includegraphics[width=\textwidth]{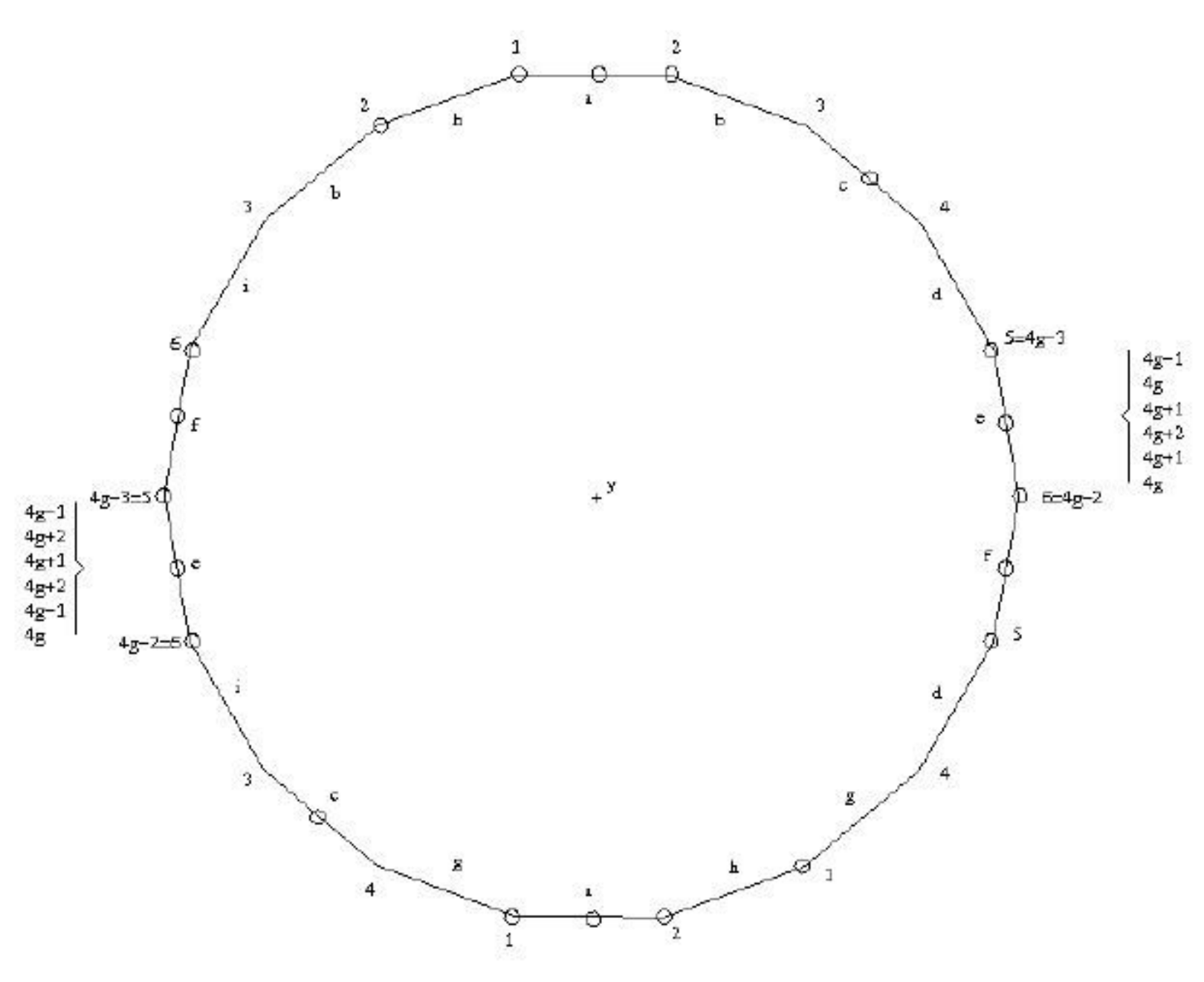}
\caption{Domain $D_2$. The points in $Q_y^{-1}$ are marked by small circles.}
\end{figure*}

Assume we have $T_g$ and $y \in T_g$ as above. Choose the right-most
 edge of $C(y)$, say $e$, and attach along it a handle. This reduces to locate
 the two images of $e$ on bd$D_g$ and to insert between their extremities 
(labeled $4g-2$ and $4g-3$) the points $4g-1, 4g+2, 4g+1, 4g+2, 4g-1,
 4g$, and respectively $4g-1, 4g, 4g+1, 4g+2, 4g+1, 4g$, see again Figure \ref{fig2}. Label the vertices of $D_{g+1}$ with the new obtained order. Identify the edges in the
 obvious way to obtain $T_{g+1}$, and notice that $\sharp_y^2 = 2g+1$, $\sharp_y^3 = 2g$.

Finally, replace (as in the case $g=1$) small disks about the midpoints
 of the distinct edges of bd$D_{g+1}$ and about the centre $y$ of
 $D_{g+1}$, by congruent bumps of centres $\tilde{x}_i$, $\tilde{y}$ in order to
 obtain $\rho(\tilde{y},\tilde{x}_i)=d(\tilde{T}_{g+1})$, where
 $\tilde{T}_{g+1}$ is the constructed surface.
\end{proof}

%%%%%%%%%%%%%%%%%%%%%%%%%%%%%%%%%%%%%%%%%5

\section{Open questions}

Our approach leaves open several problems, among which we
state in the following only three that we find particularly interesting.

\begin{enumerate}

\item
The number of points with respect to which a point $y$ on a flat torus
 is critical, does not depend on $y$. This and Theorem 1 in \cite{BIVZ} lead us to
 the following question.

Find all surfaces $S$ with the property that card$Q_y^{-1}$ does not depend on $y \in S$.

\item
For the first step in the proof of Theorem \ref{g}, we considered points $x \in Q^{-1}_y$ which are vertices of $C^{cp}(y)$, 
and white subgraphs $Y$ of $C^{cp}(y)$ centered at $x$.
In other words, if we endow the graph $C^{cp}(y)$ with the discrete natural metric, we considered $1$-neighbouthoods of the points in 
$Q^{-1}_y \cap C_3^{cp}(y)$. 
Would the use of $k$-neighbourhoods, with $k \geq 2$, improve the upper bound?

\item
Every orientable surface of genus $g>0$ possesses points $x,y$ such that $y \in Q_x$ and there are two segments from $y$ to $x$ with opposite tangent directions at $y$ 
(see the proof of Theorem 2 in \cite{BIVZ}).

Is the same true for all surfaces homeomorphic to the sphere?
Or, at least, is it true for densely many surfaces homeomorphic to the sphere?

For a similar -- still open -- problem concerning convex surfaces, see \cite{z-ep}.
\end{enumerate}

%%%%%%%%%%%%%%%%%%%%%%%%%%%%%%%%%%%%%%%%%%%%%%%%%%%%%%%%%%%%%%%%

\bigskip

\noindent {\bf Acknowledgement} 

Jin-ichi Itoh was partially supported by Grant-in-Aid for Scientific Research(C)(17K05222) JSPS.

C. V\^\i lcu  acknowledges partial financial support from the grant of the Roumanian Ministry of Research and Innovation, CNCS-UEFISCDI, project no. PN-III-P4-ID-PCE-2016-0019.

T. Zamfirescu thankfully acknowledges financial support by the High-end Foreign Experts Recruitment Program of People's Republic of China.
He is also indebted to the International Network GDRI Eco – Math for its support. 

%%%%%%%%%%%%%%%%%%%%%%%%%%%%%%%%%%%%%%%%%%%%%%%%%%%%%%%%%%%%%%

\small

%%%%%%

Jin-ichi Itoh

\noindent{\footnotesize School of Education, Sugiyama Jogakuen University
\newline 17-3 Hoshigaoka-motomachi, Chikusa-ku, Nagoya, 464-8662 Japan}

{\small \hfill j-itoh@sugiyama-u.ac.jp}

\medskip

Costin V\^\i lcu

\noindent{\footnotesize {\sl Simion Stoilow} Institute of Mathematics of the Roumanian Academy
\newline P.O. Box 1-764, 014700 Bucharest, Roumania}

{\small \hfill Costin.Vilcu@imar.ro}

\medskip

Tudor Zamfirescu

\noindent{\footnotesize Fachbereich Mathematik, Universit\"at Dortmund
\newline
44221 Dortmund, Germany
\newline and
\newline  {\sl Simion Stoilow} Institute of Mathematics of the Roumanian Academy
\newline Bucharest, Roumania
\newline
and\newline College of Mathematics and Information Science,\newline Hebei
Normal University,
\newline050024 Shijiazhuang, P.R. China.}

{\small \hfill tudor.zamfirescu@mathematik.tu-dortmund.de}

\end{document}